\documentclass[a4paper]{article}

\usepackage{amsmath,amsthm,amssymb}
\usepackage{cancel}
\usepackage[left=2.9cm, right=2.9cm, top=2.9cm, bottom=2.9cm]{geometry}
\usepackage{hyperref}
\usepackage{mathtools}
\usepackage{subfig}
\usepackage{xcolor}
\allowdisplaybreaks

\definecolor{DarkPurple}{HTML}{381d2a}
\definecolor{QueenBlue}{HTML}{3e6990}
\definecolor{LaurelGreen}{HTML}{aabd8c}
\definecolor{DutchWhite}{HTML}{e9e3b4}
\definecolor{PinkOrange}{HTML}{f39b6d}
\definecolor{green}{rgb}{0.0, 0.5, 0.0}


\hypersetup{
citebordercolor = LaurelGreen,
urlbordercolor = QueenBlue,
linkbordercolor = PinkOrange
}

\newtheorem{theorem}{Theorem}[section]
\newtheorem{lemma}[theorem]{Lemma}
\newtheorem{remark}[theorem]{Remark}
\newtheorem{example}[theorem]{Example}

\newcommand{\calA}{\mathcal{A}}
\newcommand{\calB}{\mathcal{B}}

\newcommand{\calF}{\mathcal{F}}

\newcommand{\calV}{\mathcal{V}}

\newcommand{\bbC}{\mathbb{C}}

\newcommand{\bbR}{\mathbb{R}}
\newcommand{\bbZ}{\mathbb{Z}}





\newcommand{\rme}{\mathrm{e}}
\newcommand{\rmi}{\mathrm{i}}


\newcommand{\set}[2]{\left\{ #1 \,\middle|\, #2 \right\}}
\newcommand{\abs}[1]{\left\lvert #1 \right\rvert}
\newcommand{\norm}[1]{\left\lVert #1 \right\rVert}


\DeclareMathOperator{\supp}{supp}

\DeclareMathOperator{\sinc}{sinc}

\DeclareMathOperator{\Exists}{\exists}
\DeclareMathOperator{\Forall}{\forall}

\newcommand{\dd}{\,\mathrm{d}}

\title{Uniqueness of STFT phase retrieval for bandlimited functions}
\author{Rima Alaifari \& Matthias Wellershoff \\ Seminar for Applied Mathematics, ETH Z{\"u}rich.}
\date{\today}

\begin{document}
\maketitle

\begin{abstract}
    \noindent
    We consider the problem of phase retrieval from magnitudes of short-time Fourier transform
    (STFT) measurements. It is well-known that signals are uniquely determined (up to global phase)
    by their STFT magnitude when the underlying window has an ambiguity function that is nowhere
    vanishing. It is less clear, however, what can be said in terms of unique phase-retrievability
    when the ambiguity function of the underlying window vanishes on some of the time-frequency
    plane. In this short note, we demonstrate that by considering signals in Paley--Wiener spaces,
    it is possible to prove new uniqueness results for STFT phase retrieval. Among those, we
    establish a first uniqueness theorem for STFT phase retrieval \emph{from magnitude-only 
    samples} in a real-valued setting.
\end{abstract}

\section{Introduction}
\label{sec:introduction}

The problem of phase retrieval has been around since the very early days of X-ray crystallography
\cite{Bragg60,Sayre02}. To date, its applications include coherent diffraction imaging, astronomy
and audio processing. The measurements in phase retrieval problems typically consist of phaseless
Fourier-type data of the object of interest. Acquisition of magnitude-only measurements means loss
of information that needs to be accounted for. One possible approach for phase retrieval is to
collect redundant measurements as is done in ptychography for coherent diffraction imaging
\cite{Chapman96,Hegerl70,Rodenburg08}. There, the idea is that instead of creating one set of
measurements through diffraction, a sliding pinhole is added and many masked diffraction patterns
are collected. Hence, the measurements can be thought of as magnitudes of windowed Fourier
transforms. The same is true for measurements collected in audio processing such as the phase
vocoder \cite{Griffin84,Prusa17}. Indeed, suppose one wants to alter an audio signal (for instance 
pitch-shift it). To do so, one can take its short-time Fourier transform (STFT), redistribute the
magnitudes thereof in the time-frequency plane and look for a matching audio signal by performing phase
retrieval. Motivated by these applications, we consider phaseless measurements of short-time
Fourier transforms in this note. More precisely, we analyse the question of unique phase
retrievability from STFT magnitudes when the underlying signal is bandlimited.

\vspace{5pt}
\noindent
In general, rather little is known about the uniqueness of phase retrieval from STFT magnitude
measurements. A known result is that one may recover signals up to global phase from 
phaseless STFT measurements when the ambiguity function of the underlying window function is 
nowhere vanishing \cite{Groechenig01, Grohs18} (see Lemma \ref{lem:classical}). Additionally, the
\emph{complement property} \cite{Balan06,Bandeira14,Cahill16} is a necessary condition for
uniqueness. Finally, in the finite-dimensional setting, there is a plethora of
results for phase retrieval from discrete short-time Fourier magnitudes \cite{Eldar14, Pfander15}.
The aim of this note is to provide milder assumptions on the ambiguity function of the window that
guarantee uniqueness of STFT phase retrieval when the considered signal class is that of
bandlimited functions.

\vspace{5pt}
\noindent
Let us start by summarising relevant prerequisites and fixing notations: For a function $f \in
L^2(\bbR)$, we define the \emph{short-time Fourier transform (STFT)} (with window function $\phi
\in L^2(\bbR)$) by
\[
    \calV_\phi f(x,\omega) := \int_\bbR f(t) \overline{\phi(t-x)} \rme^{-2\pi\rmi t \omega} \dd t,
    \qquad x,\omega \in \bbR.
\]
It can be shown that $\calV_\phi f$ is uniformly continuous \cite{Groechenig01}. We consider the
phase retrieval problem of recovering $f$ from STFT magnitude measurements $\abs{\calV_\phi f}$.
Note that it is impossible to distinguish between $f$ and $\rme^{\rmi \alpha} f$, for $\alpha \in
\bbR$, from the STFT magnitude measurements alone. For this reason, we aim to reconstruct $f$
\emph{up to global phase}. That is, we attempt to recover the equivalence class
\[
    [f] := \set{ f \rme^{\rmi \alpha} }{ \alpha \in \bbR }.
\]

\vspace{5pt}
\noindent
One of the most important properties of the phase retrieval problem with STFT measurements is the 
ambiguity function relation. We use the convention 
\[
    \calF f(\xi) = \int_{\bbR} f(t) \rme^{-2\pi\rmi t  \xi} \dd t,
    \qquad \xi \in \bbR,
\]
for the Fourier transform on $L^1(\bbR)$ and extend it to $L^2(\bbR)$ by a density 
argument. In addition, we can define the \emph{ambiguity function} of a signal $f \in
L^2(\bbR)$ via 
\[
    \calA f(x,\omega) :=  \rme^{\pi \rmi x \omega} \calV_f f (x,\omega).
\]
The ambiguity function relation can now be stated as follows:

\begin{lemma}[Ambiguity function relation]\label{lem:amb}
Let $f,\phi \in L^2(\bbR)$. Then, 
\[
    \calF\left( \abs{\calV_\phi f }^2 \right)(\omega, -x)
    = \calA f (x,\omega) \cdot \overline{\calA \phi (x,\omega)},
    \qquad x,\omega \in \bbR.
\]
\end{lemma}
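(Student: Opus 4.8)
The plan is to expand the left-hand side into an iterated integral and collapse it, via an integration in one frequency variable, into a product of two one-dimensional integrals that I recognise as (conjugated) ambiguity functions. First I would read $\calF$ as the two-dimensional Fourier transform acting on $\abs{\calV_\phi f}^2$, regarded as a function of the plane variables $(x',\omega')$, and evaluated at the dual point $(\omega,-x)$; this is legitimate since the orthogonality relation $\norm{\calV_\phi f}_{L^2(\bbR^2)} = \norm{f}_{L^2(\bbR)}\,\norm{\phi}_{L^2(\bbR)}$ guarantees $\abs{\calV_\phi f}^2 \in L^1(\bbR^2)$. Writing the modulus square as $\calV_\phi f \cdot \overline{\calV_\phi f}$ and inserting the definition of the STFT yields
\[
    \calF\bigl(\abs{\calV_\phi f}^2\bigr)(\omega,-x)
    = \int_{\bbR^4} f(t)\,\overline{f(s)}\,\overline{\phi(t-x')}\,\phi(s-x')\,
      \rme^{-2\pi\rmi(t-s)\omega'}\,\rme^{-2\pi\rmi(x'\omega-\omega' x)}
      \dd t \dd s \dd x' \dd\omega',
\]
where the integration runs over $(t,s,x',\omega')$.

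Second, I would carry out the $\omega'$-integration first. Collecting the $\omega'$-dependent factors gives $\rme^{2\pi\rmi\omega'(x-t+s)}$, which integrates (distributionally) to $\delta(x-t+s)$ and thereby forces $s=t-x$. What remains is the double integral
\[
    \int_{\bbR^2} f(t)\,\overline{f(t-x)}\,\overline{\phi(t-x')}\,\phi(t-x-x')\,
      \rme^{-2\pi\rmi x'\omega} \dd t \dd x'.
\]
Substituting $u=t-x'$ decouples the two factors: the integrand becomes the product of $f(t)\,\overline{f(t-x)}\,\rme^{-2\pi\rmi t\omega}$ and $\overline{\phi(u)}\,\phi(u-x)\,\rme^{2\pi\rmi u\omega}$, so the integral splits as $\calV_f f(x,\omega)\cdot\overline{\calV_\phi\phi(x,\omega)}$. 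Third, I would invoke the definition $\calA g(x,\omega)=\rme^{\pi\rmi x\omega}\calV_g g(x,\omega)$. Writing $\calV_f f=\rme^{-\pi\rmi x\omega}\calA f$ and $\overline{\calV_\phi\phi}=\rme^{\pi\rmi x\omega}\overline{\calA\phi}$, the two chirp factors cancel and the product equals $\calA f(x,\omega)\cdot\overline{\calA\phi(x,\omega)}$, which is the claim.

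The main obstacle is rigour rather than algebra: for general $f,\phi\in L^2(\bbR)$ the fourfold integral need not be absolutely convergent, so neither the interchange of integrations nor the distributional evaluation of the $\omega'$-integral is a priori justified. I would first establish the identity for $f,\phi\in\calS(\bbR)$, where Fubini's theorem and the Fourier-inversion computation of the $\omega'$-integral are all legitimate, and then pass to the limit by density. For the extension it suffices to note that both sides are continuous in $(f,\phi)\in L^2(\bbR)\times L^2(\bbR)$: the map $f\mapsto\abs{\calV_\phi f}^2$ is continuous into $L^1(\bbR^2)$ (by the identity $\abs{a}^2-\abs{b}^2=(a-b)\overline{a}+b\,\overline{(a-b)}$, Cauchy--Schwarz, and the orthogonality relation), so the pointwise values of its Fourier transform vary continuously, while $\calA f$ and $\calA\phi$ depend continuously on $f$ and $\phi$; since $\calS(\bbR)$ is dense in $L^2(\bbR)$, equality on the dense set propagates to all of $L^2(\bbR)$.
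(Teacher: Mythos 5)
Your computation is correct and its algebraic core --- expanding $\abs{\calV_\phi f}^2$, eliminating one frequency variable to force $s=t-x$, and then decoupling the remaining double integral via $u=t-x'$ into $\calV_f f(x,\omega)\,\overline{\calV_\phi\phi(x,\omega)}$ with the chirps cancelling --- is exactly the computation the paper performs. Where you differ is in how the formal steps are justified. The paper never writes the non--absolutely convergent fourfold integral: it fixes $x'$, observes $\abs{\calV_\phi f(x',\cdot)}^2=\abs{\calF f_{x'}}^2=\calF\bigl(f_{x'}\ast f_{x'}^\#\bigr)$ with $f_{x'}(t)=f(t)\overline{\phi(t-x')}\in L^1$, and applies Fourier inversion to evaluate the $\omega'$-integral as the autocorrelation $\bigl(f_{x'}\ast f_{x'}^\#\bigr)(x)$; a Tonelli estimate ($\le\norm{f}_2^2\norm{\phi}_2^2$) then licenses the second Fourier transform and Fubini directly for arbitrary $f,\phi\in L^2(\bbR)$. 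You instead evaluate the $\omega'$-integral distributionally as $\delta(x-t+s)$, prove the identity on $\calS(\bbR)$, and extend by density using continuity of both sides --- which is a legitimate alternative (your $L^1(\bbR^2)$-continuity of $f\mapsto\abs{\calV_\phi f}^2$ and the pointwise bounds on $\calV_f f$ via Cauchy--Schwarz do close the argument). One small caveat: even for $f,\phi\in\calS(\bbR)$ the fourfold integral is not absolutely convergent (its absolute value is independent of $\omega'$), so Fubini alone never justifies integrating out $\omega'$; you must invoke Fourier inversion for that step on the Schwartz class too, as you indeed indicate. The net effect is that the paper's route is more economical --- it gets the theorem for all of $L^2$ in one pass and needs no density argument --- while yours trades a sharper direct justification for a routine regularisation, at the cost of carrying the extra continuity lemmas.
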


\noindent
We included a proof of this well-known relation in appendix \ref{app:ambiguity}, for the
convenience of the reader. One direct corollary of the ambiguity function relation is that if the 
ambiguity function of $\phi \in L^2(\bbR)$ is nowhere vanishing, then one may recover the 
ambiguity function of $f \in L^2(\bbR)$ everywhere from the STFT magnitude measurements
$\abs{\calV_\phi f}$. Furthermore, $f$ is uniquely determined up to global phase by its ambiguity 
function. Combining these observations, one has (see e.g.~\cite{Groechenig01,Grohs18}):

\begin{lemma}
\label{lem:classical}
Let $\phi \in L^2(\bbR)$ be such that 
\[
    \calA \phi (x,\omega) \neq 0, \qquad \mbox{for a.e.~} (x,\omega) \in \bbR^2.
\]
Then, the following are equivalent for $f,g \in L^2(\bbR)$:
    \begin{enumerate}
        \item $f = \rme^{\rmi\alpha} g$, for some $\alpha \in \bbR$.
        \item $\abs{\calV_\phi f } = \abs{\calV_\phi g}$.
    \end{enumerate}
\end{lemma}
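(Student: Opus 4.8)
The plan is to prove the two directions separately, with essentially all the content residing in the implication (2) $\Rightarrow$ (1).

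The implication (1) $\Rightarrow$ (2) I would dispatch immediately: the STFT is linear in its first argument, so $\calV_\phi(\rme^{\rmi\alpha} g) = \rme^{\rmi\alpha}\calV_\phi g$, and passing to magnitudes annihilates the unimodular factor. The substance lies in the converse, whose engine is Lemma \ref{lem:amb}. Since $\abs{\calV_\phi f} = \abs{\calV_\phi g}$, the functions $\abs{\calV_\phi f}^2$ and $\abs{\calV_\phi g}^2$ agree, hence so do their Fourier transforms; the ambiguity function relation then yields $\calA f(x,\omega)\,\overline{\calA \phi(x,\omega)} = \calA g(x,\omega)\,\overline{\calA \phi(x,\omega)}$ for all $(x,\omega) \in \bbR^2$. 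Invoking the hypothesis that $\calA\phi$ is nonzero for a.e.\ $(x,\omega)$, I would divide through to obtain $\calA f = \calA g$ almost everywhere. As $\calA f$ and $\calA g$ differ from the self-STFTs $\calV_f f$ and $\calV_g g$ only by the common unimodular chirp $\rme^{\pi\rmi x\omega}$, this is equivalent to $\calV_f f = \calV_g g$ a.e.

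The remaining task is to recover $f$ from its self-STFT up to global phase. I would observe that, for each fixed $x$, the map $\omega \mapsto \calV_f f(x,\omega)$ is exactly the Fourier transform of $t \mapsto f(t)\,\overline{f(t-x)}$, a function lying in $L^1(\bbR)$ by Cauchy--Schwarz. Injectivity of $\calF$ then upgrades $\calV_f f = \calV_g g$ to $f(t)\,\overline{f(t-x)} = g(t)\,\overline{g(t-x)}$ for a.e.\ $(x,t)$; after the substitution $t' = t - x$ this reads $f(t)\,\overline{f(t')} = g(t)\,\overline{g(t')}$ for a.e.\ $(t,t') \in \bbR^2$. Assuming $f \not\equiv 0$ (the case $f \equiv 0$ forcing $g \equiv 0$ and being trivial), Fubini lets me fix a single $t_0$ with $f(t_0) \neq 0$ for which $f(t)\,\overline{f(t_0)} = g(t)\,\overline{g(t_0)}$ holds for a.e.\ $t$. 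This exhibits $f = c\, g$ with $c = \overline{g(t_0)}/\overline{f(t_0)}$, and the case $x = 0$, giving $\abs{f} = \abs{g}$, forces $\abs{c} = 1$, i.e.\ $c = \rme^{\rmi\alpha}$.

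I expect the main obstacle to be the measure-theoretic bookkeeping in this last step: passing from the \emph{bilinear} identity $f(t)\,\overline{f(t')} = g(t)\,\overline{g(t')}$ to a genuine global-phase statement requires a careful Fubini argument to select an admissible base point $t_0$, and one must verify that the almost-everywhere equalities survive both the division by $\calA\phi$ and the application of Fourier injectivity. The analytic inputs — integrability of $f(\cdot)\,\overline{f(\cdot - x)}$ and injectivity of $\calF$ — are standard, so the real care lies in the a.e.\ logic rather than in any hard estimate.
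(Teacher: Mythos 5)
Your argument is correct and is essentially the paper's own (only sketched there, with the details deferred to the cited references): the ambiguity function relation plus division by $\calA \phi$ yields $\calA f = \calA g$ a.e., and then the standard Fourier-inversion/Fubini argument shows that the ambiguity function determines a signal up to global phase. The measure-theoretic points you flag all go through as you describe; for the final step, note that $\abs{c}=1$ also follows painlessly from $f = c\,g$ a.e.\ together with $\norm{f}_2 = \norm{g}_2$ (or from the continuity of $\calV_f f$ and $\calV_g g$, which lets you evaluate on the line $x=0$), avoiding any issue with restricting an a.e.\ identity to the diagonal.
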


\noindent
Clearly, if $\phi \in L^2(\bbR)$ is such that $\calA \phi$ is zero in some region of $\mathbb{C}$,
then, using the ambiguity function relation, one cannot recover $\calA f$ everywhere. In this note,
we ask whether under some additional assumptions, this scenario still enjoys unique phase recovery.
In particular, we consider bandlimited functions $f$. It turns out that in this setting it suffices
to assume that $\calA \phi$ does not vanish on certain line segments in the time-frequency plane.
More precisely, for $B>0$ and $p \in \{1,2\}$, we consider the \emph{Paley--Wiener space} of
bandlimited functions defined as
\[
    \mathrm{PW}^p_B := \set{f : \bbC \to \bbC}{\Exists F \in L^p([-B,B]) \Forall z \in \bbC :
        f(z) = \int_{-B}^B F(\xi) \rme^{2\pi\rmi \xi z} \dd \xi }.
\]
We record the following classical results on functions in the Paley--Wiener space that we will make
use of:

\begin{theorem}[Paley--Wiener theorem]\label{thm:pw}
Let $B > 0$. Then, the following are equivalent:
\begin{enumerate}
    \item $f \in \mathrm{PW}^2_B$.
    \item $f$ is an entire function such that there exists a constant $c > 0$ for which
    \[
        \abs{f(z)} \leq c \, \rme^{2 \pi B \abs{z}}, \qquad z \in \bbC,
    \]
    and 
    \[
        \int_\bbR \abs{f(t)}^2 \dd t < \infty.
    \]
\end{enumerate}
\end{theorem}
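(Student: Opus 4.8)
The plan is to prove the two implications separately, the first being routine and the second carrying the real content.

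For $(1)\Rightarrow(2)$, I would start from the representation $f(z)=\int_{-B}^B F(\xi)\rme^{2\pi\rmi\xi z}\dd\xi$ with $F\in L^2([-B,B])$. Since the integral runs over a compact interval and $F\in L^2([-B,B])\subseteq L^1([-B,B])$, differentiating under the integral sign (or Morera's theorem together with Fubini) shows that $f$ is entire. For the growth bound I would use $\abs{\rme^{2\pi\rmi\xi z}}=\rme^{-2\pi\xi\Im z}\le\rme^{2\pi B\abs{z}}$ whenever $\abs{\xi}\le B$, so that $\abs{f(z)}\le\rme^{2\pi B\abs{z}}\norm{F}_{L^1([-B,B])}$ with $c:=\norm{F}_{L^1([-B,B])}\le\sqrt{2B}\,\norm{F}_{L^2([-B,B])}<\infty$ by Cauchy--Schwarz. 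Finally, $f|_\bbR$ is the inverse Fourier transform of $F$ (extended by zero), so Plancherel gives $\norm{f}_{L^2(\bbR)}=\norm{F}_{L^2([-B,B])}<\infty$.

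For the converse $(2)\Rightarrow(1)$, I would set $F:=\calF f\in L^2(\bbR)$ (Plancherel) and reduce everything to the claim that $\supp F\subseteq[-B,B]$: once this is known, Fourier inversion yields $f(x)=\int_{-B}^B F(\xi)\rme^{2\pi\rmi\xi x}\dd\xi$ for a.e.\ $x\in\bbR$, and since both sides are entire and agree a.e.\ on $\bbR$, the identity theorem upgrades this to all of $\bbC$, exhibiting $f\in\mathrm{PW}^2_B$. To locate $\supp F$, I would study the horizontal slices $f_y:=f(\cdot+\rmi y)$. Two facts drive the argument: first, a Cauchy-theorem computation (equivalently, differentiation in $y$ together with integration by parts) gives $\calF f_y(\xi)=\rme^{-2\pi y\xi}F(\xi)$; second, a Plancherel--P\'olya estimate yields $\norm{f_y}_{L^2(\bbR)}\le c\,\rme^{2\pi B\abs{y}}$ for all $y\in\bbR$.

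Combining these two facts through Plancherel gives
\[
    \int_\bbR \abs{F(\xi)}^2\rme^{-4\pi y\xi}\dd\xi
    =\norm{f_y}_{L^2(\bbR)}^2\le c^2\,\rme^{4\pi B\abs{y}},
    \qquad y\in\bbR.
\]
Letting $y\to+\infty$ forces $F$ to vanish a.e.\ on $(-\infty,-B)$ (there the weight $\rme^{-4\pi y\xi}\rme^{-4\pi By}$ blows up), and letting $y\to-\infty$ forces $F$ to vanish a.e.\ on $(B,\infty)$; hence $\supp F\subseteq[-B,B]$, as needed. I expect the main obstacle to be the Plancherel--P\'olya estimate, that is, controlling the $L^2$-norm of $f$ on horizontal lines purely from its exponential type and its membership in $L^2(\bbR)$; the natural route is a Phragm\'en--Lindel\"of argument applied to $\rme^{\pm2\pi\rmi Bz}f(z)$ in the upper and lower half-planes, exploiting subharmonicity of $\log\abs{f}$. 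A secondary technical point is that $f$ is only assumed to lie in $L^2(\bbR)$ rather than $L^1(\bbR)$, so the contour shift underlying $\calF f_y(\xi)=\rme^{-2\pi y\xi}F(\xi)$ must be justified by a truncation or regularisation argument rather than a bare application of Cauchy's theorem.
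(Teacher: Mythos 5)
The paper does not prove this statement: Theorem \ref{thm:pw} is recorded as a classical prerequisite and used as a black box, so there is no in-paper argument to compare against. Judged on its own, your outline is a correct rendition of the standard proof. The direction $(1)\Rightarrow(2)$ is complete as written. In the converse, the reduction to $\supp \calF f \subseteq [-B,B]$ via the weighted Plancherel identity $\int_\bbR \abs{F(\xi)}^2 \rme^{-4\pi y \xi}\dd\xi \le c^2 \rme^{4\pi B\abs{y}}$ and the two limits $y\to\pm\infty$ is sound, as is the final upgrade from a.e.\ equality on $\bbR$ to equality on $\bbC$ (both sides are continuous on $\bbR$ and entire). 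You have also correctly isolated the two places where the real work sits, and the routes you name are the right ones: for the Plancherel--P\'olya bound, first show $f$ is bounded on $\bbR$ (from $L^2$ plus exponential type), then apply the Phragm\'en--Lindel\"of principle for functions of exponential type in a half-plane to $\rme^{2\pi\rmi Bz}f(z)$ and $\rme^{-2\pi\rmi Bz}f(z)$, and convert boundedness plus $L^2$ boundary data into the $L^2$ bound on horizontal lines via the Poisson representation; for the contour shift with $f$ merely in $L^2(\bbR)$, a Gaussian regularisation such as $f(z)\rme^{-\pi\epsilon z^2}$ followed by $\epsilon\to 0$ does the job. Since neither of these two steps is actually carried out, what you have is a proof plan rather than a proof, but the plan contains no wrong turns.
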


\begin{theorem}[WSK sampling theorem]\label{thm:wsk}
    Let $B > 0$ and $f \in \mathrm{PW}^2_B$. Then, we have 
    \[
        f(t) = \sum_{n \in \bbZ} f\left( \frac{n}{2B} \right) \sinc\left( 2B t - n \right),
        \qquad t \in \bbR.
    \]
\end{theorem}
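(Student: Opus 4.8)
The plan is to exploit the duality between sampling $f$ on the lattice $\frac{1}{2B}\bbZ$ and expanding the spectrum $F$ in a Fourier series on the interval $[-B,B]$. By definition of $\mathrm{PW}^2_B$, there is some $F \in L^2([-B,B])$ with $f(z) = \int_{-B}^B F(\xi)\,\rme^{2\pi\rmi\xi z}\dd\xi$ for all $z \in \bbC$, so that $f$ is the (inverse) Fourier transform of a function supported in $[-B,B]$. Since $[-B,B]$ has length $2B$, the family $\{\xi \mapsto \rme^{\pi\rmi n\xi/B}\}_{n\in\bbZ}$ is an orthogonal basis of $L^2([-B,B])$, and I would write $F(\xi) = \sum_{n\in\bbZ} c_n\,\rme^{\pi\rmi n\xi/B}$ with coefficients $c_n = \frac{1}{2B}\int_{-B}^B F(\xi)\,\rme^{-\pi\rmi n\xi/B}\dd\xi$.

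The key observation is that these Fourier coefficients are, up to the factor $\frac{1}{2B}$, exactly the samples of $f$. Indeed, evaluating the defining integral at $z = -\frac{n}{2B}$ gives $f\!\left(-\frac{n}{2B}\right) = \int_{-B}^B F(\xi)\,\rme^{-\pi\rmi n\xi/B}\dd\xi = 2B\, c_n$, so $c_n = \frac{1}{2B}\,f\!\left(-\frac{n}{2B}\right)$. Substituting the Fourier series for $F$ into the integral representation of $f(t)$ and interchanging summation and integration, I would obtain $f(t) = \sum_{n\in\bbZ} c_n \int_{-B}^B \rme^{2\pi\rmi\xi(t + n/(2B))}\dd\xi$. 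The elementary computation $\int_{-B}^B \rme^{2\pi\rmi\xi s}\dd\xi = 2B\,\sinc(2Bs)$, evaluated at $s = t + \frac{n}{2B}$, produces the cardinal sine kernel $2B\,\sinc(2Bt + n)$; combining this with $c_n = \frac{1}{2B}\,f(-\frac{n}{2B})$ and reindexing $n \mapsto -n$ yields precisely the claimed series.

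The main obstacle is justifying the interchange of summation and integration, since the Fourier series for $F$ converges only in $L^2([-B,B])$ and not necessarily pointwise. I would resolve this by fixing $t \in \bbR$ and observing that the evaluation functional $g \mapsto \int_{-B}^B g(\xi)\,\rme^{2\pi\rmi\xi t}\dd\xi$ is bounded on $L^2([-B,B])$ — indeed, by the Cauchy--Schwarz inequality its operator norm is at most $\sqrt{2B}$. A bounded linear functional commutes with the $L^2$-limit of the partial sums of the Fourier series, so applying it to $F = \lim_{N} \sum_{\abs{n}\le N} c_n\,\rme^{\pi\rmi n\,\cdot/B}$ gives the corresponding convergent series for $f(t)$, and the resulting identity holds pointwise for every $t \in \bbR$. (The fact that $f$ is genuinely continuous, so that pointwise evaluation is meaningful, follows from Theorem \ref{thm:pw}.)
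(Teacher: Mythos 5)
Your argument is correct and complete. Note, however, that the paper does not prove this statement at all: it is recorded as a classical result (the Whittaker--Shannon--Kotelnikov theorem) alongside the Paley--Wiener theorem, and is simply cited and used later, so there is no in-paper proof to compare against. What you have written is the standard duality proof --- expand the spectrum $F$ in the orthogonal basis $\{\rme^{\pi\rmi n\xi/B}\}_{n\in\bbZ}$ of $L^2([-B,B])$, identify the Fourier coefficients with the samples $\tfrac{1}{2B}f(-\tfrac{n}{2B})$, and push the expansion through the reproducing integral --- and all the details check out: the normalization of the coefficients, the identity $\int_{-B}^B \rme^{2\pi\rmi\xi s}\dd\xi = 2B\sinc(2Bs)$ (consistent with the paper's convention $\sinc(x)=\sin(\pi x)/(\pi x)$), and the justification of the interchange via boundedness of the evaluation functional on $L^2([-B,B])$. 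One small remark: since $\{c_n\}\in\ell^2$ and $\{\sinc(2Bt-n)\}_{n\in\bbZ}\in\ell^2$ for each fixed $t$, Cauchy--Schwarz even gives absolute convergence of the cardinal series, which is slightly stronger than the convergence of symmetric partial sums your functional argument yields; but this is not needed for the statement as given.
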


\begin{theorem}[see Theorem 1 in \cite{Thakur10}, p.~723]
    \label{thm:tst}
    Let $p \in \{1,2\}$, let $B > 0$ and let $f \in \mathrm{PW}_B^p$ be real-valued on the real
    line. Then, $f$ can be uniquely determined up to global sign from
    $\set{ \abs{f(\tfrac{n}{4B})} }{ n \in \bbZ}$.  
\end{theorem}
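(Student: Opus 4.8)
The plan is to pass to the squared signal $f^2$: squaring turns the unknown global sign into the factorisation ambiguity of entire functions, and it doubles the bandwidth so that the grid $\{\tfrac{n}{4B}\}_{n\in\bbZ}$ becomes exactly the Nyquist set of $f^2$. First I would record that, since $f$ is real-valued on $\bbR$, the unsigned samples determine the samples of the square: $\abs{f(\tfrac{n}{4B})}^2 = f(\tfrac{n}{4B})^2$. Writing $F := \calF f \in L^p([-B,B]) \subset L^1([-B,B])$, a computation justified by Fubini shows that $f^2$ admits the Paley--Wiener representation with spectral density $F * F$, which by Young's inequality lies in $L^1([-2B,2B])$. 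Hence $f^2 \in \mathrm{PW}_{2B}^1$ is bandlimited to $[-2B,2B]$, and $\{\tfrac{n}{4B}\}_{n\in\bbZ}$ is precisely its Nyquist grid.

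Now let $g \in \mathrm{PW}_B^p$ be any real-valued function on $\bbR$ with $\abs{g(\tfrac{n}{4B})} = \abs{f(\tfrac{n}{4B})}$ for all $n \in \bbZ$; I must show $g = \pm f$. Setting $G := \calF g$, the previous step yields $h := f^2 - g^2 \in \mathrm{PW}_{2B}^1$ with spectral density $H := F*F - G*G \in L^1([-2B,2B])$, and by the matching of unsigned samples $h(\tfrac{n}{4B}) = 0$ for every $n$. Substituting $\eta = 2Bu$ in $h(\tfrac{n}{4B}) = \int_{-2B}^{2B} H(\eta)\,\rme^{\pi\rmi \eta n/(2B)}\dd\eta$ exhibits the samples of $h$ as, up to the constant $2B$, the Fourier coefficients of $u \mapsto H(2Bu)$ on $[-1,1]$ relative to the complete orthogonal system $\{\rme^{\pi\rmi n u}\}_{n\in\bbZ}$. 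Since all of these vanish and an $L^1$ function is determined by its Fourier coefficients, $H \equiv 0$, whence $h \equiv 0$ and $f^2 = g^2$ on $\bbR$.

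It remains to descend from $f^2 = g^2$ to $f = \pm g$. Both $f$ and $g$ extend to entire functions (Theorem \ref{thm:pw}), so the identity $f^2 = g^2$ on $\bbR$ propagates to all of $\bbC$ by analytic continuation, giving $(f-g)(f+g) \equiv 0$ as an identity between entire functions. The ring of entire functions is an integral domain, so one factor vanishes identically: either $f = g$ or $f = -g$. This is exactly uniqueness up to global sign, and since $g$ was arbitrary the theorem follows.

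The step I expect to be delicate is the \emph{critical} (as opposed to strictly over-) sampling of $f^2$: at the exact Nyquist rate the function $\sin(4\pi B\,\cdot)$ vanishes on the entire grid $\{\tfrac{n}{4B}\}$, so one cannot a priori rule out such a component in $h$. The argument above sidesteps this precisely because $H$ is an honest $L^1$ function on $[-2B,2B]$ with no point masses at the endpoints $\pm 2B$ --- which is what $F \in L^1$ together with Young's inequality guarantees --- so Fourier-coefficient uniqueness annihilates $h$ outright. This is also the reason to argue through Fourier coefficients rather than the $L^2$ expansion of Theorem \ref{thm:wsk}: for $p = 1$ the square $f^2$ need not be square-integrable, so the WSK series is not directly available.
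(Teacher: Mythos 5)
Your proof is correct. Note, however, that the paper does not prove this statement at all: it is imported verbatim from Thakur's work (Theorem~1 in \cite{Thakur10}), so there is no in-paper argument to compare against. What you have written is a clean, self-contained derivation of the special case actually quoted, namely the uniform grid $\{\tfrac{n}{4B}\}_{n\in\bbZ}$. Your route follows the same governing idea as Thakur's (pass to $f^2$, which doubles the bandwidth and converts sign ambiguity into the factorisation $(f-g)(f+g)\equiv 0$ in the integral domain of entire functions), but where Thakur must invoke uniqueness-set and density machinery for Bernstein--Paley--Wiener spaces to handle arbitrary separated sequences of density at least $4B$ --- which is what the remark after Theorem~\ref{thm:sampled} appeals to --- you exploit the uniformity of the grid to reduce everything to the elementary fact that an $L^1$ function on an interval of length $4B$ is determined by its Fourier coefficients. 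All the delicate points are handled correctly: $F\in L^p([-B,B])\subset L^1([-B,B])$ so $F*F\in L^1([-2B,2B])$ by Young, which puts $f^2\in\mathrm{PW}^1_{2B}$ for both $p=1$ and $p=2$ (neatly avoiding the fact that $f^2$ need not be in $L^2(\bbR)$ when $p=1$); and your observation that critical sampling is harmless here because the spectral density of $f^2-g^2$ is an honest $L^1$ function with no atoms at $\pm 2B$ is exactly the right reason the $\sin(4\pi B\,\cdot)$ obstruction does not arise. The only thing your argument does not deliver is the full generality of Thakur's theorem (non-uniform separated sampling sets of density $\geq 4B$), but that generality is not claimed in the statement as quoted.
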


\noindent
Another property of bandlimited functions that we will employ is that their ambiguity function is
compactly supported in frequency domain.

\begin{lemma}
\label{lem:pw_amb}
    Let $B > 0$ and $f \in \mathrm{PW}^2_B$. Then, $\calA f$ is uniformly continuous and $\supp
    \calA f \subset \bbR \times (-2B,2B)$.
\end{lemma}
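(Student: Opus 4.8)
The plan is to reduce everything to a representation of $\calA f$ on the \emph{frequency} side, where the band-limitation of $f$ becomes a compact-support statement that can be exploited directly. First I would pass to symmetric coordinates: substituting $t = s + x/2$ in the definition $\calA f(x,\omega) = \rme^{\pi\rmi x\omega}\calV_f f(x,\omega)$ cancels the chirp and yields
\[
    \calA f(x,\omega) = \int_\bbR f\!\left(s+\tfrac{x}{2}\right)\overline{f\!\left(s-\tfrac{x}{2}\right)}\,\rme^{-2\pi\rmi s\omega}\dd s,
\]
so that $\calA f(x,\cdot) = \calF h_x$ with $h_x(s) := f(s+\tfrac x2)\overline{f(s-\tfrac x2)} \in L^1(\bbR)$. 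Writing $F := \calF f \in L^2([-B,B])$ and applying the convolution theorem $\calF(uv) = \calF u * \calF v$ to the two factors of $h_x$ (whose transforms are modulated copies of $F$ and of $\overline{F(-\cdot)}$), a short computation gives the dual representation
\[
    \calA f(x,\omega) = \int_\bbR F\!\left(\zeta+\tfrac{\omega}{2}\right)\overline{F\!\left(\zeta-\tfrac{\omega}{2}\right)}\,\rme^{2\pi\rmi\zeta x}\dd\zeta .
\]
The one technical point here is justifying the convolution identity, which is routine since $F$ has compact support (so all integrands are in $L^1$).

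The support claim is then immediate from this formula. The integrand is nonzero only where both $\zeta + \tfrac\omega2 \in [-B,B]$ and $\zeta - \tfrac\omega2 \in [-B,B]$, i.e.\ for $\zeta \in [-B+\tfrac{\abs\omega}{2},\, B - \tfrac{\abs\omega}{2}]$. This set has positive Lebesgue measure precisely when $\abs\omega < 2B$; for $\abs\omega \ge 2B$ it is empty or a single point, so the integral vanishes. Hence $\calA f(x,\omega) = 0$ whenever $\abs\omega \ge 2B$, which is the claimed support inclusion.

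For uniform continuity I would again work from the dual representation and split the increment into an $x$-step and an $\omega$-step. In the $x$-direction the compact support of $F$ confines $\zeta$ to $[-B,B]$, so $\abs{\rme^{2\pi\rmi\zeta x} - \rme^{2\pi\rmi\zeta x'}} \le 2\pi B\abs{x-x'}$ and Cauchy--Schwarz yields the Lipschitz bound $\abs{\calA f(x,\omega) - \calA f(x',\omega)} \le 2\pi B \norm{F}_{L^2}^2\,\abs{x-x'}$, \emph{uniformly} in $\omega$. In the $\omega$-direction I would instead use $\abs{\rme^{2\pi\rmi\zeta x}} = 1$ to bound the increment independently of $x$, add and subtract $F(\zeta+\tfrac{\omega'}2)\overline{F(\zeta-\tfrac\omega2)}$, and apply Cauchy--Schwarz to obtain $\abs{\calA f(x,\omega) - \calA f(x,\omega')} \le \norm{F}_{L^2}\big(\norm{F(\cdot+\tfrac\omega2) - F(\cdot+\tfrac{\omega'}2)}_{L^2} + \norm{F(\cdot-\tfrac\omega2) - F(\cdot-\tfrac{\omega'}2)}_{L^2}\big)$, which tends to $0$ as $\omega \to \omega'$ by continuity of translation in $L^2$, again uniformly in $x$. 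Combining the two estimates through the triangle inequality gives joint uniform continuity.

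I expect the main obstacle to be the uniform continuity, and specifically the $\omega$-direction: the obvious time-domain estimate from the first display fails because $f$ is not compactly supported in time, so the factor $\abs{\rme^{-2\pi\rmi s\omega} - \rme^{-2\pi\rmi s\omega'}} \le 2\pi\abs s\,\abs{\omega-\omega'}$ cannot be controlled uniformly in $s$. The whole point of moving to the frequency-side representation is that there the roles are reversed --- compactness of $\supp F$ tames the $x$-direction while strong $L^2$-continuity of translation tames the $\omega$-direction --- which is exactly what upgrades local to uniform continuity.
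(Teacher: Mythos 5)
Your proof is correct. For the support statement you end up in essentially the same place as the paper: both arguments pass to the frequency side and express $\calA f(x,\omega)$ as an integral of $F(\cdot+\tfrac{\omega}{2})\overline{F(\cdot-\tfrac{\omega}{2})}$ (the paper writes the unsymmetrised version $\int_{-B}^{B}\calF f(\xi)\,\overline{\calF f(\xi-\omega)}\,\rme^{-2\pi\rmi x(\omega-\xi)}\dd\xi$ obtained from $\calF(f\cdot f_x)=\calF f\ast\calF f_x$ with $f_x(t)=\overline{f(t-x)}$), and then observe that the two translates of $[-B,B]$ have null intersection once $\abs{\omega}\geq 2B$. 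Where you genuinely diverge is the uniform continuity: the paper simply cites the standard fact that $\calV_f f$ is uniformly continuous for $f\in L^2(\bbR)$ (Gr{\"o}chenig) and multiplies by the chirp, whereas you derive it from scratch out of the dual representation --- a Lipschitz bound in $x$ from the compact support of $F$ plus Cauchy--Schwarz, and continuity of translation in $L^2$ for the $\omega$-increment, each estimate uniform in the other variable. This costs you a page of routine estimates but buys a self-contained proof; it also quietly sidesteps a small point the paper glosses over, namely that multiplying a uniformly continuous function by the chirp $\rme^{\pi\rmi x\omega}$ preserves uniform continuity only because $\calV_f f$ additionally vanishes at infinity. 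Your closing diagnosis is also accurate: the time-domain formula cannot give uniformity in $\omega$ because $f$ is not compactly supported, and moving to the frequency side is exactly the right fix. The only cosmetic remark is that for $\abs{\omega}=2B$ you could equally conclude by continuity from the open region $\abs{\omega}>2B$, but your measure-zero-intersection argument is fine.
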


\begin{proof}
    See appendix \ref{app:pw_amb}.
\end{proof}

\noindent
Therefore, we can consider $\phi \in L^2(\bbR)$ such that $\calA \phi$ does not vanish on 
$\bbR \times (-2B,2B)$ and reconstruct all $f \in \mathrm{PW}^2_B$ up to global phase from the STFT
magnitude measurements $\abs{\calV_\phi f }$. In what follows, we will show that, in fact, the STFT
phase retrieval problem is uniquely solvable for signals in $\mathrm{PW}^2_B$ under weaker
assumptions on $\calA \phi$. We remark that our uniqueness results are mainly of theoretical
interest and do not suggest a method for stable phase recovery.

\paragraph{Outline} This paper is divided into two main parts. First, in Section \ref{sec:real}, we
consider signals in the Paley--Wiener space which are real-valued on the real line and develop two
uniqueness results for this case: In particular, we show that if the ambiguity function of the
window is non-zero almost everywhere on a certain line segment in the time-frequency plane, then
all bandlimited signals are uniquely determined by their STFT magnitudes (Subsection
\ref{ssec:full}). In addition, we show that if the Fourier transform of the window is non-zero
almost everywhere on an open interval around the origin and if the window is real-valued itself,
then all bandlimited signals are uniquely determined by \emph{samples} of their STFT magnitudes
(Subsection \ref{ssec:sampled}). Secondly, in Section \ref{sec:complex}, we consider general
signals in the Paley--Wiener space and develop two uniqueness results in this setting. More
precisely, we show that if the ambiguity function of the window is non-zero almost everywhere on
two parallel line segments in the time-frequency plane that are sufficiently close together, then
uniqueness of phase retrieval from STFT magnitudes holds for all signals in $\mathrm{PW}^2_B$
(Subsection \ref{ssec:two}). In addition, we show that if the ambiguity function of the window does
not vanish on a single line segment in the time-frequency plane, then all bandlimited signals are
uniquely determined by their STFT magnitudes (Subsection \ref{ssec:single}). In Section
\ref{sec:conclusion}, we discuss our results for some examples of window classes.

\section{Real-valued signals}
\label{sec:real}

\subsection{Reconstruction from full measurements}
\label{ssec:full}

If $f \in \mathrm{PW}^2_B$, for some $B>0$, and the window $\phi \in L^2(\bbR)$ is such that $\calA
\phi (0,\omega) \neq 0$, for $\omega \in (-2B,2B)$, then one can use the Ambiguity Function
Relation to obtain $\calA f (0,\cdot)$ everywhere. Therefore, one can recover $\abs{f}$ on the real
line via Fourier inversion. If $f$ is real-valued on the real line, this is enough to recover $f$
everywhere up to global phase \cite{Thakur10} (see Theorem \ref{thm:tst}). The last insight is
particularly important such that we state it as a lemma.

\begin{lemma}
\label{lem:real_insight}
    Let $B > 0$ and $f \in \mathrm{PW}^2_B$ be real-valued on the real line. Then, $f$ is uniquely
    determined by $\set{ \abs{f(t)} }{t \in \bbR}$ up to global sign.
\end{lemma}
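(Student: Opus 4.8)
The plan is to derive this directly from Theorem \ref{thm:tst}, of which it is essentially an immediate corollary. The crucial observation is that the full magnitude data $\set{\abs{f(t)}}{t \in \bbR}$ contains, as a subset, the sampled magnitudes on the grid $\tfrac{1}{4B}\bbZ$, namely $\set{\abs{f(\tfrac{n}{4B})}}{n \in \bbZ}$. Since $f \in \mathrm{PW}^2_B$ is real-valued on the real line, the hypotheses of Theorem \ref{thm:tst} are satisfied with $p = 2$ and bandlimit $B$, so the sampled magnitudes already suffice to pin down $f$ up to global sign.

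First I would make the uniqueness statement explicit. Let $g \in \mathrm{PW}^2_B$ be another function that is real-valued on the real line and satisfies $\abs{g(t)} = \abs{f(t)}$ for every $t \in \bbR$. Restricting this identity to the grid points $t = \tfrac{n}{4B}$, $n \in \bbZ$, yields $\abs{g(\tfrac{n}{4B})} = \abs{f(\tfrac{n}{4B})}$ for all $n \in \bbZ$. Theorem \ref{thm:tst} then asserts that a real-valued function in $\mathrm{PW}^2_B$ is uniquely determined up to global sign by exactly these sampled magnitudes, whence $g = f$ or $g = -f$. This is precisely the claimed uniqueness up to global sign.

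There is no genuine obstacle here: all of the analytic substance is carried by the cited sampling-type phase-retrieval result, Theorem \ref{thm:tst}. The only things to verify are the trivial set inclusion $\tfrac{1}{4B}\bbZ \subset \bbR$ and that $f$ meets the hypotheses of that theorem, both of which are immediate. The reason for isolating the statement as a lemma is not its difficulty but its role in the larger argument: it is the ingredient that upgrades recovery of $\abs{f}$ on the real line (which one obtains from the ambiguity function relation of Lemma \ref{lem:amb} once $\calA f(0,\cdot)$ is known) to recovery of $f$ itself up to global phase.
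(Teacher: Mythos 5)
Your proposal is correct and coincides with the paper's primary argument, which simply invokes Theorem \ref{thm:tst} after observing that the full magnitude data contains the samples on the grid $\tfrac{1}{4B}\bbZ$; you have merely spelled out that one-line reduction. (The paper also sketches an alternative, self-contained argument via analytic continuation of $\abs{f}$ from an interval where $f$ does not vanish, but your route is the one the paper takes as its main proof.)
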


\begin{proof}
    This follows immediately from Theorem \ref{thm:tst}. Alternatively, one might
    consider the following argument: Let us assume without loss of generality that $f$ is
    non-trivial. By the Paley--Wiener theorem (see Theorem \ref{thm:pw}), $f$ is an entire
    function. The roots of a non-zero entire function are isolated and therefore there must exist
    an interval $I \subset \bbR \subset \bbC$ such that for all $t \in I$, $f(t) \neq 0$.
    Therefore, $\abs{f}$ agrees with $f$ up to global sign on $I$. In other words, $\abs{f}$ is the
    restriction of $f$ or $-f$ to the interval $I$ and thus analytically extending $\abs{f}$ from
    $I$ to $\bbC$ yields $f$ or $-f$.
\end{proof}

\noindent
Note that the same does not hold for general signals. Indeed, consider the following counterexample.

\begin{example}
Let $B > 0$, $f(z) = \sinc(Bz)$ and $g(z) = \sinc(Bz) \rme^{\pi\rmi B z}$, for $z \in \bbC$.
One can readily show that 
\begin{align*}
    f(z) &= \frac{1}{B} \int_{-B}^B \chi_{[-B/2,B/2]}(\xi) \rme^{2\pi\rmi\xi z} \dd \xi, \quad z \in \bbC, \\
    g(z) &= \frac{1}{B} \int_{-B}^B \chi_{[0,B]}(\xi) \rme^{2\pi\rmi\xi z} \dd \xi, \quad z \in \bbC.
\end{align*}
Therefore, we have $f,g \in \mathrm{PW}^2_B$. In addition, 
\[
    \abs{f(t)} = \abs{\sinc(Bt)} = \abs{g(t)}, \qquad t \in \bbR,
\]
but $f$ and $g$ do not agree up to global phase.
\end{example}

\noindent
Many more counterexamples may be constructed using Hadamard's factorisation theorem and ideas
similar to the ones in \cite{McDonald04}. We may now combine the lemma above with the ambiguity
function relation to derive the following theorem.

\begin{theorem}
\label{thm:real}
Let $B > 0$ and $\phi \in L^2(\bbR)$ such that 
\[
    \calA \phi(0,\omega) \neq 0, \qquad \mbox{for a.e.~} \omega \in (-2B,2B).
\]
Then, the following are equivalent for $f,g \in \mathrm{PW}^2_B$ real-valued on the real line:
\begin{enumerate}
    \item $f = \pm g$.
    \item $\abs{\calV_\phi f } = \abs{\calV_\phi g }$.
\end{enumerate}
\end{theorem}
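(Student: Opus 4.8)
The plan is to establish the nontrivial implication (2) $\Rightarrow$ (1); the converse is immediate, since $f = \pm g$ gives $\calV_\phi f = \pm \calV_\phi g$ and hence $\abs{\calV_\phi f} = \abs{\calV_\phi g}$. For the forward direction I would feed the hypothesis into the ambiguity function relation. Because $f,\phi \in L^2(\bbR)$ the map $\calV_\phi f$ lies in $L^2(\bbR^2)$, so $\abs{\calV_\phi f}^2 \in L^1(\bbR^2)$ and its Fourier transform is well defined; the same holds for $g$. Since $\abs{\calV_\phi f} = \abs{\calV_\phi g}$ pointwise, Lemma \ref{lem:amb} yields $\calA f(x,\omega)\,\overline{\calA \phi(x,\omega)} = \calA g(x,\omega)\,\overline{\calA \phi(x,\omega)}$ for all $(x,\omega) \in \bbR^2$. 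Restricting to the frequency axis $x = 0$ and dividing by $\overline{\calA \phi(0,\omega)}$, which is legitimate for a.e.\ $\omega \in (-2B,2B)$ by hypothesis, I obtain $\calA f(0,\omega) = \calA g(0,\omega)$ for a.e.\ $\omega \in (-2B,2B)$.

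Next I would promote this to an identity on all of $\bbR$. By Lemma \ref{lem:pw_amb}, the ambiguity functions $\calA f$ and $\calA g$ are uniformly continuous and supported in $\bbR \times (-2B,2B)$, so $\calA f(0,\omega) = 0 = \calA g(0,\omega)$ whenever $\abs{\omega} \geq 2B$. Combining this with the almost-everywhere equality on $(-2B,2B)$ and invoking continuity gives $\calA f(0,\cdot) = \calA g(0,\cdot)$ on $\bbR$. The crucial observation is that restricting the ambiguity function to the frequency axis reproduces the Fourier transform of the squared modulus: directly from the definitions, $\calA f(0,\omega) = \calV_f f(0,\omega) = \calF(\abs{f}^2)(\omega)$, and similarly for $g$. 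Therefore $\calF(\abs{f}^2) = \calF(\abs{g}^2)$, and Fourier inversion, together with the continuity of $f$ and $g$ guaranteed by the Paley--Wiener theorem (Theorem \ref{thm:pw}), forces $\abs{f} = \abs{g}$ everywhere on $\bbR$.

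Finally, since $f$ and $g$ lie in $\mathrm{PW}^2_B$ and are real-valued on the real line, Lemma \ref{lem:real_insight} shows that each is determined, up to a global sign, by its modulus on $\bbR$. As $\abs{f} = \abs{g}$, both functions are equal up to sign to the analytic continuation of this common modulus, so $f = \pm g$, which completes the proof. I expect the only genuinely delicate step to be the upgrade from almost-everywhere to everywhere equality of $\calA f(0,\cdot)$ and $\calA g(0,\cdot)$: this is where the compact frequency support of Lemma \ref{lem:pw_amb} and the uniform continuity of the ambiguity functions must be used in tandem. Once $\abs{f} = \abs{g}$ has been secured, the real-valued uniqueness insight disposes of the remaining argument essentially for free.
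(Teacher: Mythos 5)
Your proposal is correct and follows essentially the same route as the paper's own proof: the ambiguity function relation restricted to the frequency axis, the upgrade from almost-everywhere to everywhere equality via continuity and the compact frequency support of Lemma \ref{lem:pw_amb}, the identification $\calA f(0,\cdot) = \calF(\abs{f}^2)$, and finally Lemma \ref{lem:real_insight}. No gaps.
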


\begin{proof}
    First, note that if $f = \pm g$, then it follows immediately that $\abs{\calV_\phi f } =
    \abs{\calV_\phi g }$. Secondly, suppose that $\abs{\calV_\phi f } = \abs{\calV_\phi g }$. It
    follows from the ambiguity function relation that 
    \[
        \calA f(x,\omega) \cdot \overline{\calA \phi (x,\omega)}
        = \calA g(x,\omega) \cdot \overline{\calA \phi (x,\omega)},
        \qquad x,\omega \in \bbR.
    \]
    Hence, by the assumption on the ambiguity function of the window, $\calA f(0,\omega) =
    \calA g (0,\omega)$, for a.e.~$\omega \in (-2B,2B)$. By the Paley--Wiener theorem,
    $f$ and $g$ are square integrable and thus $\calA f$ and $\calA g$ are (uniformly)
    continuous. Therefore, $\calA f(0,\omega) = \calA g(0,\omega)$, for all $\omega
    \in (-2B,2B)$. We know from Lemma \ref{lem:pw_amb} that $\supp \calA f,\supp \calA g \subset
    \bbR \times (-2B,2B)$ and consequently, that $\calA f(0,\cdot) = \calA g(0,\cdot)$. Since
    \[
        \calA f(0,\cdot) = \calF\left( \abs{f}^2 \right), \qquad \calA g(0,\cdot) = \calF\left( \abs{g}^2 \right),
    \]
    we have $\abs{f} = \abs{g}$. Applying Lemma \ref{lem:real_insight} yields the assertion.
\end{proof}

\begin{remark}
    Using basic Fourier-analytic results, one can show a statement which is similar to Theorem
    \ref{thm:real} for compactly supported, even, real-valued functions:

    \vspace{5pt}
    \noindent
    Let $B > 0$ and $\phi \in L^2(\bbR)$ such that 
    \[
        \calA \phi(x,0) \neq 0, \qquad \mbox{for a.e.~} x \in (-2B,2B).
    \]
    Then, the following are equivalent for $f,g \in L^2([-B,B])$ even and real-valued:
    \begin{enumerate}
        \item $f = \pm g$.
        \item $\abs{\calV_\phi f } = \abs{\calV_\phi g }$.
    \end{enumerate}
\end{remark}

\subsection{Reconstruction from sampled STFT measurements}
\label{ssec:sampled}

So far, we have not used the sampling theorem in \cite{Thakur10} for the reconstruction from
\emph{samples} of the STFT magnitudes. Let us consider the same setup as before and ask whether the
phase is uniquely determined \emph{by sampled data}. To the best of our knowledge, the following is
the first uniqueness result for phase retrieval from sampled STFT magnitude measurements.

\begin{theorem}
\label{thm:sampled}
    Let $B > 0$ and let $\phi \in L^2(\bbR)$ be a real-valued window such that 
    \[
        \left(\calF \phi\right)(\xi) \neq 0, \qquad \mbox{for a.e.~}\xi \in (-B,B).
    \]
    Then, the following are equivalent for $f,g \in \mathrm{PW}^2_B$ real-valued on the real line:
    \begin{enumerate}
        \item $f = \pm g$.
        \item $\abs{\calV_\phi f\left(\frac{n}{4B},0\right)} = \abs{\calV_\phi g\left(\frac{n}{4B},0\right)}$,
            for all $n \in \bbZ$.
    \end{enumerate}
\end{theorem}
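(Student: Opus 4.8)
The plan is to reduce the sampled STFT magnitude problem to a sampled magnitude problem for a single real-valued bandlimited function and then to invoke Theorem \ref{thm:tst}. The implication $1 \Rightarrow 2$ is immediate: if $f = \pm g$, then $\calV_\phi f = \pm \calV_\phi g$ pointwise, so the magnitudes coincide at every sample. For the converse, the key observation is that the slice $\omega = 0$ of the STFT is a convolution. Writing $\phi^*(s) := \overline{\phi(-s)}$, one has
\[
    \calV_\phi f(x,0) = \int_\bbR f(t) \overline{\phi(t-x)} \dd t = (f \ast \phi^*)(x), \qquad x \in \bbR,
\]
and hence, on the Fourier side, $\calF\!\left( \calV_\phi f(\cdot,0) \right) = \calF f \cdot \overline{\calF \phi}$. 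I would therefore set $h_f := \calV_\phi f(\cdot, 0)$ and $h_g := \calV_\phi g(\cdot, 0)$ and show that both lie in $\mathrm{PW}^1_B$ and are real-valued on the real line.

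To see the membership, note that $f \in \mathrm{PW}^2_B$ forces $\supp \calF f \subset [-B,B]$, so $\calF h_f = \calF f \cdot \overline{\calF \phi}$ is supported in $[-B,B]$ as well, and by Cauchy--Schwarz
\[
    \int_{-B}^B \abs{\calF f(\xi)} \, \abs{\calF \phi(\xi)} \dd \xi
    \leq \norm{\calF f}_{L^2(\bbR)} \, \norm{\calF \phi}_{L^2([-B,B])} < \infty .
\]
Thus $\calF h_f \in L^1([-B,B])$, and Fourier inversion exhibits $h_f$ as an element of $\mathrm{PW}^1_B$ (and likewise $h_g$). This is precisely why Theorem \ref{thm:tst} is invoked for $p \in \{1,2\}$ rather than only $p = 2$: convolving with a merely square-integrable window need not preserve $\mathrm{PW}^2_B$, but it does land the result in $\mathrm{PW}^1_B$. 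Reality of $h_f$ on $\bbR$ is immediate from the integral representation, since for $x \in \bbR$ the integrand $f(t)\,\phi(t-x)$ is real (both $f$ on $\bbR$ and $\phi$ are real-valued by assumption).

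Condition $2$ now reads $\abs{h_f(\tfrac{n}{4B})} = \abs{h_g(\tfrac{n}{4B})}$ for all $n \in \bbZ$, so Theorem \ref{thm:tst} applied to $h_f, h_g \in \mathrm{PW}^1_B$ yields a single sign $\varepsilon \in \{+1,-1\}$ with $h_f = \varepsilon\, h_g$. Taking Fourier transforms gives $\calF f \cdot \overline{\calF \phi} = \varepsilon\, \calF g \cdot \overline{\calF \phi}$ on $(-B,B)$, and the hypothesis $\calF \phi(\xi) \neq 0$ for a.e.\ $\xi \in (-B,B)$ permits cancelling the window factor to obtain $\calF f = \varepsilon\, \calF g$ a.e.\ on $(-B,B)$. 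Since both transforms vanish outside $[-B,B]$, this equality extends to a.e.\ $\xi \in \bbR$, whence $f = \varepsilon\, g = \pm g$. The main obstacle I anticipate is not the algebra but the bookkeeping in the middle step: one must resist assuming $h_f \in \mathrm{PW}^2_B$ (which may fail for an $L^2$ window) and instead correctly place it in $\mathrm{PW}^1_B$, and one must be careful that the cancellation of $\overline{\calF \phi}$ is valid only almost everywhere and only on $(-B,B)$ --- which is exactly the frequency band on which $\calF f$ and $\calF g$ are supported, so the a.e.\ non-vanishing hypothesis is what makes the whole reduction go through.
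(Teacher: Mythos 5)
Your argument is correct and follows essentially the same route as the paper: reduce the $\omega=0$ slice of the STFT to the convolution $f \ast \phi^{\#}$ with $\phi^{\#}(t)=\overline{\phi(-t)}$, place it in $\mathrm{PW}^1_B$ via the convolution theorem (the paper likewise uses that $F'\,\overline{\calF\phi}\in L^1([-B,B])$, which is your Cauchy--Schwarz step), apply Theorem \ref{thm:tst} to get equality up to a global sign, and cancel $\overline{\calF\phi}$ a.e.\ on $(-B,B)$. No gaps; your explicit care about landing in $\mathrm{PW}^1_B$ rather than $\mathrm{PW}^2_B$ matches the paper's reasoning exactly.
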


\begin{proof}
First, note that if $f = \pm g$, then it follows immediately that
\[
    \abs{\calV_\phi f\left(\frac{n}{4B},0\right)} = \abs{\calV_\phi g\left(\frac{n}{4B},0\right)},
    \qquad n \in \bbZ.
\]
Secondly, assume that the above equation holds. Let us define $\phi^\#(t) :=
\overline{\phi(-t)}$, for $t \in \bbR$ (this simplifies to $\phi^\#(t) :=
\phi(-t)$ because $\phi$ is real-valued). Note that 
\[
    \calV_\phi f(x,0) = \int_\bbR f(t) \overline{\phi(t-x)} \dd t
    = \left( f \ast \phi^\# \right)(x), \qquad x \in \bbR,
\]
and hence
\[
    \abs{\left( f \ast \phi^\# \right)\left(\frac{n}{4B}\right)} =
    \abs{\left( g \ast \phi^\# \right)\left(\frac{n}{4B}\right)},
    \qquad n \in \bbZ.
\]
As $f,g \in \mathrm{PW}^2_B$, it follows from the convolution theorem that
$f \ast \phi^\#$ and $g \ast \phi^\#$ extend to functions in $\mathrm{PW}^1_B$. Indeed, as
$f \in \mathrm{PW}^2_B$, there exists $F' \in L^2([-B,B])$ such that 
\[
    f(z) = \int_{-B}^B F'(\xi) \rme^{2\pi\rmi\xi z} \dd \xi, \qquad z \in \bbC.
\]
Now, consider $F = F' \overline{\calF \phi} \in L^1([-B,B])$. Then, by the convolution theorem,
\[
    (f \ast \phi^\#)(t) = \int_{-B}^B F(\xi) \rme^{2 \pi \rmi \xi t} \dd \xi,
    \qquad t \in \bbR.
\]
Therefore, the analytic extensions of $f \ast \phi^\#$ and $g \ast \phi^\#$ belong to
$\mathrm{PW}_B^1$. In addition, $f \ast \phi^\#$ and $g \ast \phi^\#$ are real-valued such that it
follows from Theorem \ref{thm:tst} that $f \ast \phi^\# = \pm (g \ast \phi^\#)$. Consequently, we
have
\[
    \calF \left(f \ast \phi^\#\right) = \pm \calF \left(g \ast \phi^\#\right).
\]
By the assumption on the Fourier transform of the window, it follows
that $\calF f = \pm \calF g$ and hence, $f = \pm g$.
\end{proof}

\begin{remark} We observe the following:
    \begin{enumerate}
        \item The sampling rate only depends on the bandwidth of the signals and is exactly twice
            the Nyquist rate.
        \item For the Gaussian $\phi(t) := \rme^{-\pi t^2}$, $t \in \bbR$, we can readily see that 
            $\calF \phi$ is non-zero everywhere. In addition, the Gaussian is real-valued such that
            the theorem above implies that all bandlimited signals are uniquely determined by
            samples of their STFT magnitudes with Gaussian window (also called Gabor transform
            magnitudes).
        \item We use the uniform sampling sequence $X = \{ \tfrac{n}{4B} \}_{n \in \bbZ} \subset \bbR$, for 
        convenience of notation. In fact, our result still holds if one replaces $X =
        \{ \tfrac{n}{4B} \}_{n \in \bbZ}$ by any separated, uniformly dense sampling sequence with
        density lower bounded by $4B$ \cite{Thakur10}.
        \item While the STFT is complex-valued, we employ only real-valued information on one line of
        the time-frequency plane. On this line, \emph{sign retrieval} suffices for the uniqueness
        result to hold.
    \end{enumerate}
\end{remark}

%

\section{Complex-valued signals}
\label{sec:complex}

\subsection{Using the ambiguity function on two line segments}
\label{ssec:two}

We have seen that for complex-valued $f \in \mathrm{PW}^2_B$, it is not true that $f$ is
uniquely determined up to global phase by $\set{\abs{f(t)}}{t \in \bbR}$. Therefore, we need to
change our strategy to deduce a general uniqueness result. One can, for instance, impose slightly
stronger assumptions on the ambiguity function of the window to show that in this case one may
recover $f$ uniquely up to global phase from $\abs{\calV_\phi f}$.

\begin{theorem}
\label{thm:complex}
Let $B > 0$, $c \in (0, \tfrac{1}{2B}]$ and $\phi \in L^2(\bbR)$ such that 
\[
    \calA \phi(0,\omega) \neq 0 \qquad \mbox{and} \qquad \calA \phi(c,\omega) \neq 0,
\]
for a.e.~$\omega \in (-2B,2B)$. Then, the following are equivalent for $f,g \in \mathrm{PW}^2_B$:
\begin{enumerate}
    \item $f = \rme^{\rmi\alpha} g$, for some $\alpha \in \bbR$.
    \item $\abs{\calV_\phi f} = \abs{\calV_\phi g}$.
\end{enumerate}
\end{theorem}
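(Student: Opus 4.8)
The implication (1) \(\Rightarrow\) (2) is immediate, so I focus on (2) \(\Rightarrow\) (1) and assume \(f,g\) non-trivial. The plan is to extract two scalar identities on the real line from the two line segments \(x=0\) and \(x=c\), combine them into a periodicity statement for the meromorphic ratio \(f/g\), and then close the argument with a Fourier-support computation that crucially uses \(c \le \tfrac{1}{2B}\). First I would feed \(\abs{\calV_\phi f} = \abs{\calV_\phi g}\) into the ambiguity function relation (Lemma \ref{lem:amb}) to obtain \(\calA f(x,\omega)\,\overline{\calA\phi(x,\omega)} = \calA g(x,\omega)\,\overline{\calA\phi(x,\omega)}\). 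Evaluating on the two lines, dividing out the non-vanishing factors \(\calA\phi(0,\cdot),\calA\phi(c,\cdot)\) on \((-2B,2B)\), and then extending by continuity together with the support bound of Lemma \ref{lem:pw_amb}, yields \(\calA f(0,\cdot) = \calA g(0,\cdot)\) and \(\calA f(c,\cdot) = \calA g(c,\cdot)\) on all of \(\bbR\). Recognising that \(\calA f(0,\omega) = \calF(\abs{f}^2)(\omega)\) and \(\calA f(c,\omega) = \rme^{\pi\rmi c\omega}\,\calF(t \mapsto f(t)\overline{f(t-c)})(\omega)\), Fourier inversion then produces the two pointwise identities \(\abs{f(t)} = \abs{g(t)}\) and \(f(t)\overline{f(t-c)} = g(t)\overline{g(t-c)}\) for all \(t \in \bbR\).

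Next I would pass to the ratio. Taking the complex conjugate of the second identity and dividing by \(\abs{f(t)}^2 = \abs{g(t)}^2\) shows \(f(t-c)g(t) = f(t)g(t-c)\) wherever \(f,g\) do not vanish; since \(f,g\) are entire by Theorem \ref{thm:pw}, both sides are entire and agree off a discrete set, hence \(f(z)g(z-c) = g(z)f(z-c)\) for all \(z \in \bbC\). Equivalently, the meromorphic function \(p := f/g\) is \(c\)-periodic. From \(\abs{f} = \abs{g}\) on \(\bbR\) I get \(\abs{p} = 1\) there; moreover \(\abs{f} = \abs{g}\) forces the real zeros of \(f\) and \(g\) to coincide with equal multiplicities, so \(p\) has no poles on \(\bbR\) and extends to a continuous, \(c\)-periodic function on \(\bbR\) with \(\abs{p} \equiv 1\). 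In particular \(p\) is bounded and \(g = p f\).

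To finish, I would expand the continuous periodic function \(p\) into its Fourier series \(p(t) = \sum_{n \in \bbZ} b_n \rme^{2\pi\rmi n t/c}\) and exploit \(g = p f\) on the Fourier side. Since modulation by \(\rme^{2\pi\rmi n t/c}\) translates the Fourier transform by \(n/c\), one obtains \(\calF g(\xi) = \sum_{n \in \bbZ} b_n\, \calF f(\xi - n/c)\). As \(f,g \in \mathrm{PW}^2_B\), both \(\calF f\) and \(\calF g\) are supported in \([-B,B]\); because \(1/c \ge 2B\), the translates \(\calF f(\cdot - n/c)\) have pairwise disjoint supports, meeting at most in the endpoints. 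Restricting the identity to the interval \((n/c - B,\, n/c + B)\), on which only the \(n\)-th translate can be non-zero while \(\calF g\) vanishes for \(n \ne 0\), forces \(b_n = 0\) for every \(n \ne 0\). Hence \(p \equiv b_0\) is constant, \(\abs{b_0} = 1\) by \(\abs{f} = \abs{g}\), and \(f = \rme^{\rmi\alpha} g\) as claimed.

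The main obstacle is making the ratio step and the termwise Fourier computation rigorous. The key point is that \(\abs{f} = \abs{g}\) gives not only \(\abs{p} = 1\) but also matches the real zeros, ruling out poles of \(p\) on \(\bbR\); this is exactly what lets me treat \(p\) as a genuine bounded periodic multiplier rather than merely a meromorphic function, and it is where I expect the delicate bookkeeping to lie. The identity \(\calF g = \sum_n b_n \calF f(\cdot - n/c)\) should be justified by approximating \(p\) with the Fej\'er means of its Fourier series, which converge to \(p\) uniformly (as \(p\) is continuous) and are uniformly bounded, so that the corresponding products converge to \(g\) in \(L^2\) and the interchange with \(\calF\) is legitimate; one then reads off \(b_n = 0\) in the limit. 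A minor point is the boundary case \(c = \tfrac{1}{2B}\), where the support translates just touch, which is harmless since the overlap has measure zero.
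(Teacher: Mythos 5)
Your proof is correct, and while the first half coincides with the paper's (ambiguity function relation on the two lines $x=0$ and $x=c$, yielding $\abs{f}=\abs{g}$ and $f(t)\overline{f(t-c)}=g(t)\overline{g(t-c)}$ on $\bbR$), your closing argument is genuinely different. The paper never forms the ratio $f/g$: it picks a single offset $t_0$ so that the lattice $t_0+c\bbZ$ avoids all zeros of $f$ and $g$, propagates the phase $\rme^{\rmi\alpha}$ recursively along that lattice, and then invokes the WSK sampling theorem (valid since $c\le\tfrac1{2B}$ puts $f,g$ in $\mathrm{PW}^2_{1/(2c)}$) to upgrade agreement on the lattice to agreement everywhere. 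You instead show that $p:=f/g$ is a $c$-periodic meromorphic function that is continuous and unimodular on $\bbR$ --- which requires the extra (correct) bookkeeping that $\abs{f}=\abs{g}$ forces real zeros to match with equal multiplicities --- and then kill all non-zero Fourier coefficients $b_n$ of $p$ by observing that the translates $\calF g(\cdot-n/c)$ have essentially disjoint supports when $1/c\ge 2B$; the Fej\'er-mean justification of the termwise identity is sound. The two arguments use the hypothesis $c\le\tfrac1{2B}$ in dual ways (critical-rate sampling versus non-overlapping spectral translates), and they are of comparable difficulty; yours has the conceptual advantage of exhibiting the obstruction to uniqueness explicitly as a non-constant unimodular $c$-periodic multiplier, which also illuminates why the paper's Example 3.3 fails for $c>\tfrac1{2B}$, while the paper's lattice argument avoids all discussion of zeros and poles of the ratio. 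One typographical slip: having defined $p=f/g$ you write $g=pf$ where you mean $f=pg$ (and correspondingly $\calF f=\sum_n b_n\calF g(\cdot-n/c)$); since $\abs{p}\equiv 1$ on $\bbR$ the argument is symmetric in $f$ and $g$, so this is immaterial.
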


\begin{proof}
    First, note that if $f = \rme^{\rmi\alpha} g$, for some $\alpha \in \bbR$, then it follows
    immediately that $\abs{\calV_\phi f} = \abs{\calV_\phi g}$. Secondly, suppose that
    $\abs{\calV_\phi f} = \abs{\calV_\phi g}$. Let us also assume without loss of generality 
    that $f$ and $g$ are non-zero. As in the proof of Theorem \ref{thm:real}, we find that 
    $\calA f(0,\cdot) = \calA g(0,\cdot)$ and $\calA f(c,\cdot) = \calA g(c,\cdot)$. Therefore,
    $\abs{f} = \abs{g}$ on $\bbR$ and 
    \[
        f(t) \overline{f(t-c)} = g(t) \overline{g(t-c)}, \qquad t \in \bbR.
    \]
    By the Paley--Wiener theorem, $f$ and $g$ are entire functions. Therefore, we know that $f$ and
    $g$ have a countable number of roots (as their roots are isolated). In particular, there exists
    some $t_0 \in \bbR$ such that for all $n \in \bbZ$, we have 
    \[
        f(t_0 + n c) \neq 0 \qquad \mbox{and} \qquad g(t_0 + nc) \neq 0.
    \]
    Now, let us set $\alpha \in (-\pi,\pi]$ to be such that 
    \[
        f(t_0) = \rme^{\rmi \alpha} g(t_0).
    \]
    Then, we can use the relation 
    \[
        f(t) \overline{f(t-c)} = g(t) \overline{g(t-c)}, \qquad t \in \bbR,
    \]
    to recursively find that 
    \[
        f(t_0 + n c) = \rme^{\rmi \alpha} g(t_0 + nc), \qquad n \in \bbZ.
    \]
    Finally, since $f,g \in \mathrm{PW}^2_B$ and $c \leq \frac{1}{2B}$, it follows that
    $f(t_0 + \cdot), g(t_0 + \cdot) \in \mathrm{PW}^2_{\frac{1}{2c}}$. Therefore,
    we deduce from the WSK sampling theorem (see Theorem \ref{thm:wsk}) that
    \[
        f(t_0 + t) = \sum_{n \in \bbZ} f(t_0 + n c) \sinc\left( \frac{t}{c} - n \right)
        = \sum_{n \in \bbZ} \rme^{\rmi \alpha} g(t_0 + nc) \sinc\left( \frac{t}{c} - n \right)
        = \rme^{\rmi \alpha } g(t_0 + t), 
    \]
    for all $t \in \bbR$. Hence, we conclude that $f = \rme^{\rmi \alpha} g$.
\end{proof}

\begin{remark}
    We can apply basic Fourier analysis to develop a result similar to Theorem \ref{thm:complex}
    for compactly supported functions:

    \vspace{5pt}
    \noindent
    Let $B > 0$, $c \in (0, \tfrac{1}{2B}]$ and $\phi \in L^2(\bbR)$ such that 
    \[
        \calA \phi(x,0) \neq 0 \qquad \mbox{and} \qquad \calA \phi(x,c) \neq 0,
    \]
    for a.e.~$x \in (-2B,2B)$. Then, the following are equivalent for $f,g \in L^2([-B,B])$:
    \begin{enumerate}
        \item $f = \rme^{\rmi\alpha} g$, for some $\alpha \in \bbR$.
        \item $\abs{\calV_\phi f} = \abs{\calV_\phi g}$.
    \end{enumerate}
\end{remark}

\noindent
We note that the requirement $c \leq \frac{1}{2B}$ is linked to the use of the WSK sampling theorem
in the proof of the above result. The following example shows that it is necessary in order for $f
\in \mathrm{PW}^2_B$ to be uniquely determined up to global phase by $\abs{f(t)}$ and $f(t)
\overline{f(t-c)}$, for $t \in \bbR$.

\begin{example}
Let $\epsilon,B > 0$ and $c = \frac{1}{2B-\epsilon} > \frac{1}{2B}$. Consider $f(z) =
\sinc(\epsilon z) \rme^{\pi \rmi (2B-\epsilon) z}$ as well as $g(z) = \sinc(\epsilon z)
\rme^{- \pi \rmi (2B-\epsilon) z}$, for $z \in \bbC$. Then, it is readily seen that 
\[
    f(z) = \frac{1}{\epsilon} \int_{-B}^B \chi_{[B-\epsilon,B]}(\xi) \rme^{2\pi\rmi\xi z} \dd \xi,
    \qquad z \in \bbC,
\]
as well as
\[
    g(z) = \frac{1}{\epsilon} \int_{-B}^B \chi_{[-B,-B+\epsilon]}(\xi) \rme^{2\pi\rmi\xi z} \dd \xi,
    \qquad z \in \bbC.
\]
Therefore, we have $f,g \in \mathrm{PW}_B^2$. In addition, 
\[
    \abs{f(t)} = \abs{\sinc(\epsilon t)} = \abs{g(t)},
\]
as well as 
\begin{align*}
    f(t) \overline{f(t - c)} &= \sinc(\epsilon t) \sinc(\epsilon(t-c)) \rme^{\pi \rmi (2B-\epsilon) c}
    = - \sinc(\epsilon t) \sinc(\epsilon(t-c)), \\
    &= \sinc(\epsilon t) \sinc(\epsilon(t-c)) \rme^{-\pi \rmi (2B-\epsilon) c}
    = g(t) \overline{g(t - c)},
\end{align*}
hold, for $t \in \bbR$. However, $f$ and $g$ do not agree up to global phase.
\end{example}

\noindent
Many more examples may be constructed using Hadamard's factorisation theorem and ideas similar to 
the ones in \cite{McDonald04}.

\subsection{Using the ambiguity function on a single line segment}
\label{ssec:single}

We can approach the reconstruction of general bandlimited functions from their STFT magnitude
measurements from a slightly different angle and obtain another uniqueness result. The following
statement is neither stronger nor weaker than Theorem \ref{thm:complex}: Indeed, it has the
advantage that one only needs to assume that the ambiguity function of the window does not vanish
on a single line segment while having the disadvantage that one has to make this assumption
pointwise and not in an $L^2$-sense.

\begin{theorem}
\label{thm:complex_new}
Let $B > 0$ and $\phi \in L^2(\bbR)$ be such that 
\[
    \calA \phi(0,\omega) \neq 0, \qquad \omega \in [-2B,2B].
\]
Then, the following are equivalent for $f,g \in \mathrm{PW}_B^2$:
\begin{enumerate}
    \item $f = \rme^{\rmi\alpha} g$, for some $\alpha \in \bbR$.
    \item $\abs{\calV_\phi f} = \abs{\calV_\phi g}$.
\end{enumerate}
\end{theorem}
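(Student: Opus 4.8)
The plan is to open exactly as in the proof of Theorem~\ref{thm:real} and then to upgrade the single-segment hypothesis into agreement on a whole horizontal strip by exploiting the \emph{continuity} of $\calA\phi$. The implication $1 \Rightarrow 2$ is immediate, so I would assume $\abs{\calV_\phi f} = \abs{\calV_\phi g}$ and, without loss of generality, that $f,g$ are non-trivial. By the ambiguity function relation (Lemma~\ref{lem:amb}) we get $\calA f(x,\omega)\cdot\overline{\calA\phi(x,\omega)} = \calA g(x,\omega)\cdot\overline{\calA\phi(x,\omega)}$ for all $(x,\omega)\in\bbR^2$, so $\calA f = \calA g$ on the set where $\calA\phi \neq 0$.

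The crucial step is to observe that this set contains an open strip around the prescribed segment. Since $\calV_\phi\phi$ is uniformly continuous, $\calA\phi = \rme^{\pi\rmi x\omega}\calV_\phi\phi$ is continuous; as $\abs{\calA\phi}$ is then continuous and strictly positive on the \emph{compact} segment $\{0\}\times[-2B,2B]$, it is bounded below by some $m>0$ there, and the tube lemma furnishes a $\delta>0$ with $\calA\phi\neq 0$ on $(-\delta,\delta)\times[-2B,2B]$. Hence $\calA f = \calA g$ on this strip. For each fixed $x\in(-\delta,\delta)$, combining this with $\supp\calA f,\supp\calA g\subset\bbR\times(-2B,2B)$ (Lemma~\ref{lem:pw_amb}) gives $\calA f(x,\cdot) = \calA g(x,\cdot)$ on all of $\bbR$. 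Cancelling the common factor $\rme^{\pi\rmi x\omega}$ and inverting the Fourier transform in $\omega$ (the integrands $t\mapsto f(t)\overline{f(t-x)}$ lie in $L^1$ and are continuous) produces a \emph{continuum} of autocorrelation identities
\[
    f(t)\overline{f(t-x)} = g(t)\overline{g(t-x)}, \qquad t\in\bbR,\ x\in(-\delta,\delta).
\]

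Next I would convert the continuum of shifts into local rigidity. Setting $x=0$ gives $\abs{f}=\abs{g}$ on $\bbR$; since $f$ is a non-trivial entire function (Theorem~\ref{thm:pw}), its zeros are isolated, so there is an interval on which $f$ — and therefore $g$ — is zero-free. Fixing an interior point $t_0$, the quotient $r:=f/g$ is holomorphic and unimodular near $t_0$, and the displayed identity rewrites as $r(t)\overline{r(t-x)}=1$. Fixing $t=t_0$ and using $\abs{r(t_0)}=1$ forces $r(t_0-x)=r(t_0)$ for all small $x$, so $r$ is constant, equal to some $\rme^{\rmi\alpha}$, on a real neighbourhood of $t_0$. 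Thus $f = \rme^{\rmi\alpha}g$ on an interval, and analytic continuation extends this to all of $\bbC$.

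The main obstacle is the second step: the hypothesis $\calA\phi(0,\cdot)\neq 0$ appears too weak, because (as the $\sinc$ example shows) $\abs{f}=\abs{g}$ on $\bbR$ alone does not determine $f$ up to global phase. The resolution is that pointwise nonvanishing on the \emph{closed} segment, together with continuity of the ambiguity function, automatically propagates to an open strip of shifts, and it is precisely this continuum — in contrast with the two discrete shifts used in Theorem~\ref{thm:complex} — that pins down $f/g$ as locally constant. The points requiring care are that the tube-lemma neighbourhood is uniform in $\omega$ over $[-2B,2B]$ and that the Fourier inversion giving the autocorrelation identities is legitimate, but both are routine.
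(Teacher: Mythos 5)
Your proposal is correct and follows essentially the same route as the paper's proof: propagating the nonvanishing of $\calA\phi$ from the compact segment $\{0\}\times[-2B,2B]$ to an open strip $(-\delta,\delta)\times[-2B,2B]$ by continuity (the paper uses the extreme value theorem plus uniform continuity where you invoke the tube lemma), extending via Lemma~\ref{lem:pw_amb} to obtain the continuum of autocorrelation identities $f(t)\overline{f(t-x)} = g(t)\overline{g(t-x)}$ for $x\in(-\delta,\delta)$, and then fixing a point $t_0$ where $f,g$ are nonvanishing to deduce $f=\rme^{\rmi\alpha}g$ on an interval and hence everywhere by analyticity. Your quotient formulation $r=f/g$ is a cosmetic variant of the paper's direct substitution and changes nothing of substance.
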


\begin{proof}
    First, note that if $f = \rme^{\rmi\alpha} g$, for some $\alpha \in \bbR$, then it follows
    immediately that $\abs{\calV_\phi f} = \abs{\calV_\phi g}$. Secondly, suppose that
    $\abs{\calV_\phi f} = \abs{\calV_\phi g}$ and assume without loss of generality that $f$ and 
    $g$ are non-zero. Note that $\calA \phi$ is continuous such that $\abs{\calA \phi }$ is
    continuous and by assumption $\abs{\calA \phi(0,\omega)} > 0$, for $\omega \in [-2B,2B]$.
    By the extreme value theorem, there exists a positive constant $\Delta > 0$ such that
    $\abs{\calA \phi(0,\omega)} \geq \Delta$, for $\omega \in [-2B,2B]$. As
    $\abs{\calA \phi}$ is uniformly continuous, there exists a $\delta > 0$ such that 
    \[
        \abs{\calA \phi(x,\omega)} >
        \tfrac{\Delta}{2}, \qquad (x,\omega) \in (-\delta,\delta) \times [-2B,2B].
    \]
    In particular, it follows that 
    \[
        \calA \phi(x,\omega) \neq 0, \qquad (x,\omega) \in (-\delta,\delta)
        \times [-2B,2B].
    \]
    Lemma \ref{lem:amb} further implies
    \[
        \calA f(x,\omega) = \calA g (x,\omega), \qquad (x,\omega) \in (-\delta,\delta) \times
        [-2B,2B].
    \]
    Hence, by employing Lemma \ref{lem:pw_amb}, we deduce that $ \calA f(x,\omega) = \calA g (x,\omega)$,
    for $(x,\omega) \in (-\delta,\delta) \times \bbR$. By Fourier inversion, we have
    \[
        f(t)\overline{f(t-c)} = g(t)\overline{g(t-c)}, \qquad t \in \bbR,
    \]
    for $c \in (-\delta,\delta)$. As $f$ and $g$ are entire functions and assumed to be non-zero, 
    there exists a $t_0 \in \bbR$ such that $f(t_0), g(t_0) \neq 0$. As $\abs{f(t_0)} =
    \abs{g(t_0)}$, it follows that there exists an $\alpha \in \bbR$ such that $f(t_0) =
    \rme^{\rmi \alpha} g(t_0)$. This implies that $f(t) = \rme^{\rmi \alpha} g(t)$ for all 
    $t \in (t_0-\delta, t_0+\delta)$. As $f$ and $g$ are entire, we conclude that $f =
    \rme^{\rmi \alpha} g$.
\end{proof}

\begin{remark}
    As before, we can make a similar statement as the above for compactly supported functions:

    \vspace{5pt}
    \noindent
    Let $B > 0$ and $\phi \in L^2(\bbR)$ be such that 
    \[
        \calA \phi(x,0) \neq 0, \qquad x \in [-2B,2B].
    \]
    Then, the following are equivalent for $f,g \in L^2([-B,B])$:
    \begin{enumerate}
        \item $f = \rme^{\rmi\alpha} g$, for some $\alpha \in \bbR$.
        \item $\abs{\calV_\phi f} = \abs{\calV_\phi g}$.
    \end{enumerate}
\end{remark}

\section{Examples}
\label{sec:conclusion}

In the following, we want to consider different windows and their ambiguity functions in order to 
put the results which we have developed in context. We start by considering the most
well-known window in time-frequency analysis: The \emph{Gaussian window} $\phi(t) :=
\rme^{-\pi t^2}$, for $t \in \bbR$ (see Figure \ref{fig:gaussian}). For the Gaussian, one can show that
\begin{figure}[b!]
    \centering
    \subfloat[][]{\includegraphics[width=.45\linewidth]{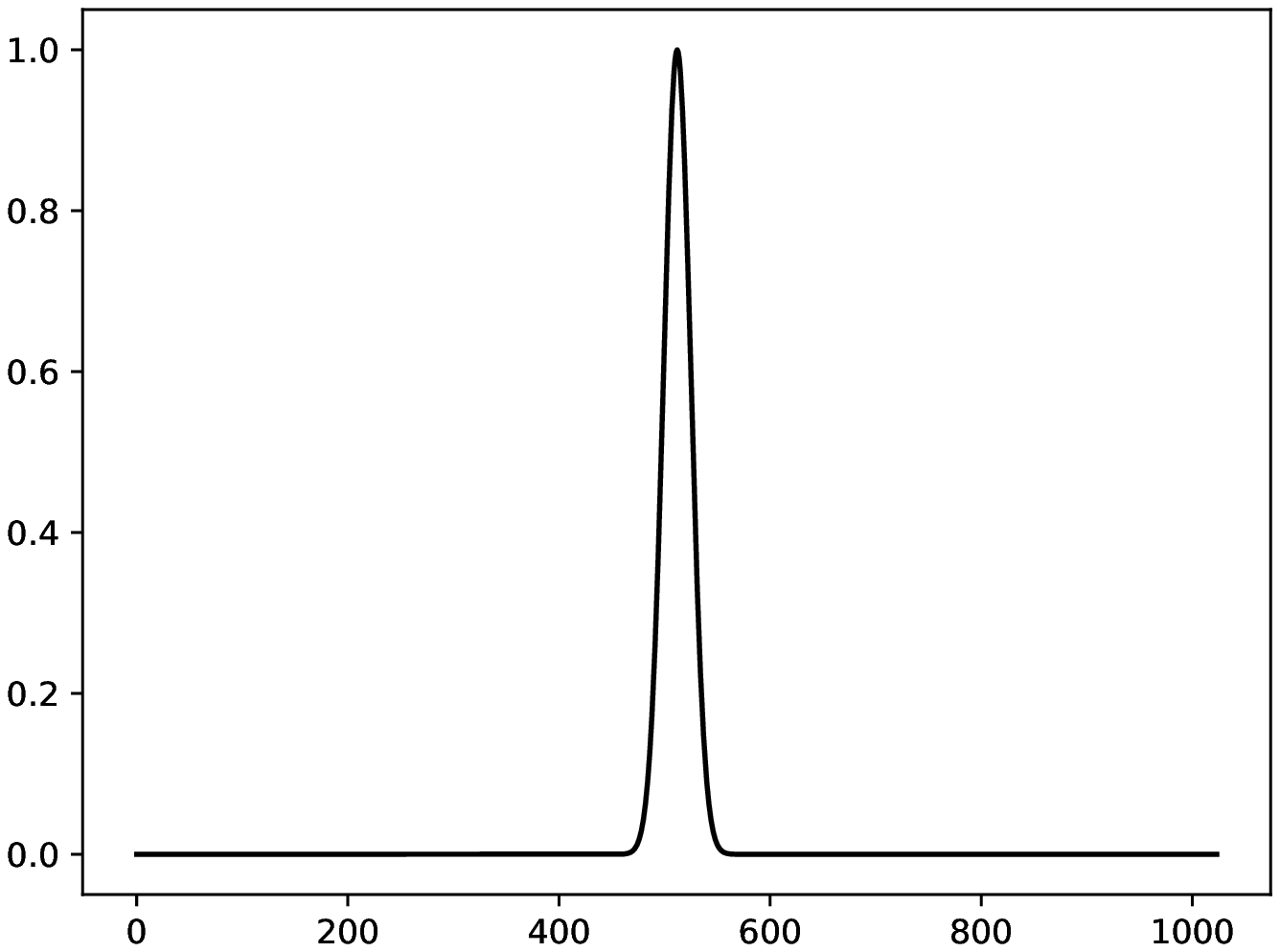}}
    \subfloat[][]{\includegraphics[width=.45\linewidth]{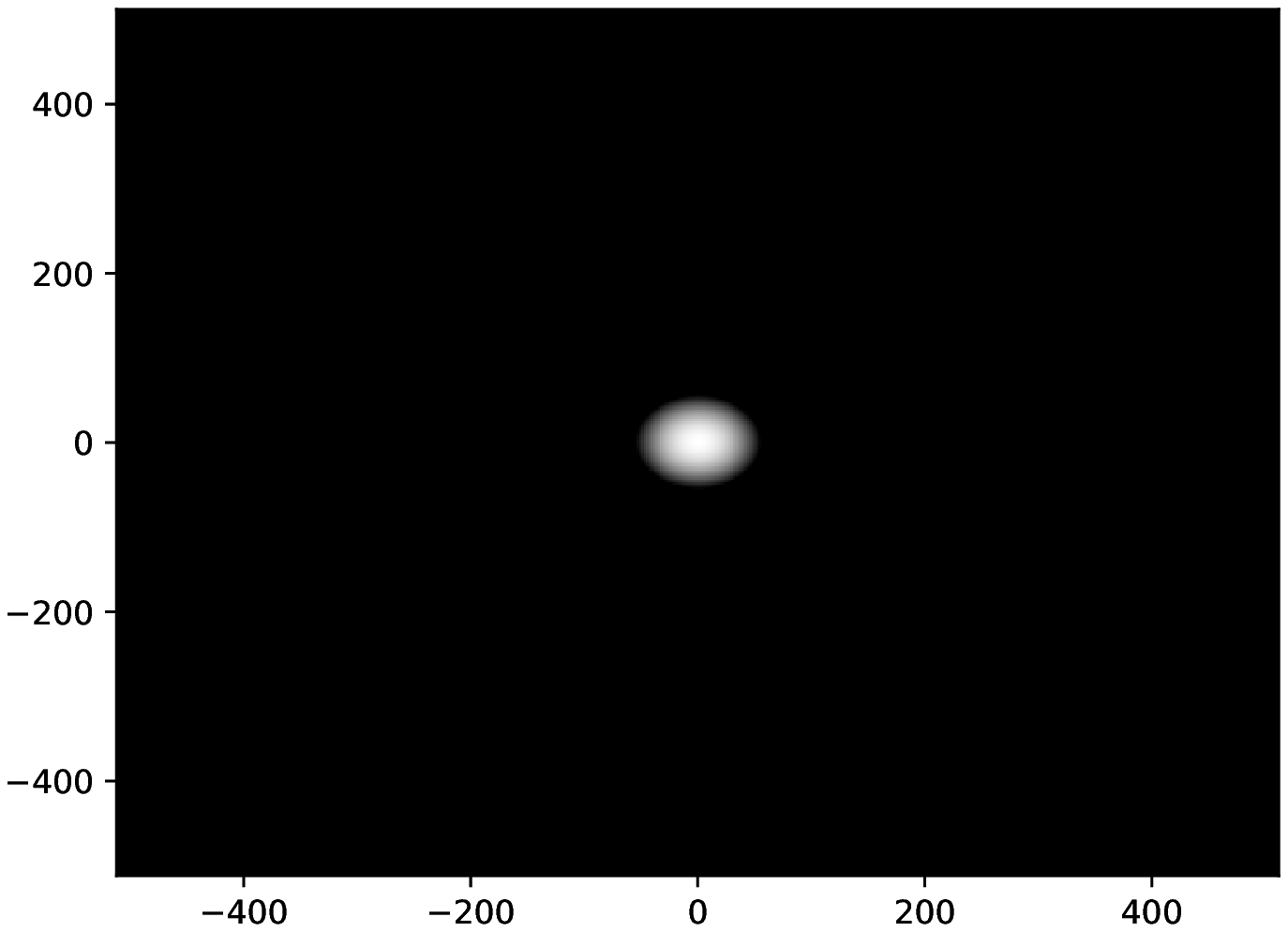}}
    \caption{Picture of a discretisation of the Gaussian window and its ambiguity function.}
    \label{fig:gaussian}
\end{figure}
\[
\calA \phi(x,\omega) = \frac{1}{\sqrt{2}}
    \rme^{-\frac{\pi}{2}\left( x^2 + \omega^2 \right)},
    \qquad x,\omega \in \bbR.
\]
Therefore, $\calA \phi$ is nowhere vanishing and all $f \in L^2(\bbR)$ may be uniquely recovered
up to global phase from their STFT magnitude measurements $\abs{\calV_\phi f}$. Note that
this already follows from the classical theory about uniqueness of STFT phase retrieval (Lemma
\ref{lem:classical}). One could be tempted to believe that the only windows for which 
the ambiguity functions are non-zero everywhere are the generalised Gaussians $\rme^q$, where $q$
is a polynomial of degree two. This belief is wrong, however, as was recently shown in
\cite{Groechenig19}, and one can in fact construct more functions $\phi \in L^2(\bbR)$ such that 
$\calA \phi \neq 0$ everywhere. Finally, note that $\phi$ is real-valued and that the Fourier
transform of $\phi$ is also a Gaussian. In particular, $\calF \phi$ vanishes nowhere and it
follows from Theorem \ref{thm:sampled} that for all $B > 0$, one can recover all $f \in
\mathrm{PW}^2_B$ that are real-valued on the real line up to global sign from the sampled
measurements $\abs{\calV_\phi f(\tfrac{n}{4B},0)}$, for $n \in \bbZ$.

\vspace{5pt}
\noindent 
The next class of window functions we want to study is that of the Hermite functions. We define 
the monomials 
\[
    e_n(z) := \sqrt{ \frac{\pi^n}{n!} } z^n, \qquad z \in \bbC,
\]
for $n \in \bbZ_{\geq 0}$. One can show that these monomials form an orthonormal basis of the Fock 
space $\calF^2(\bbC)$ \cite{Groechenig01}. The pre-images of these monomials under the Bargmann
transform $\calB : L^2(\bbR) \to \calF^2(\bbC)$ are called \emph{Hermite functions} and we write $H_n
:= \calB^{-1} e_n$, for $n \in \bbZ_{\geq 0}$. The ambiguity function of the Hermite functions can be
expressed in terms of the \emph{Laguerre polynomials} 
\[
    L_k^{(j)}(t) := \sum_{m = 0}^k \frac{(k+j)!}{(k-m)!(j+m)!} \frac{(-t)^m}{m!},
    \qquad t \in \bbR,
\]
where $k,j \in \bbZ_{\geq 0}$ \cite{Folland89}. In particular, we have for all $x,\omega \in \bbR$ and
$z = x + \rmi \omega$ that (see Figure \ref{fig:hermite1} and Figure \ref{fig:hermite7} for an
illustration):
\begin{figure}
    \centering
    \subfloat[][]{\includegraphics[width=.45\linewidth]{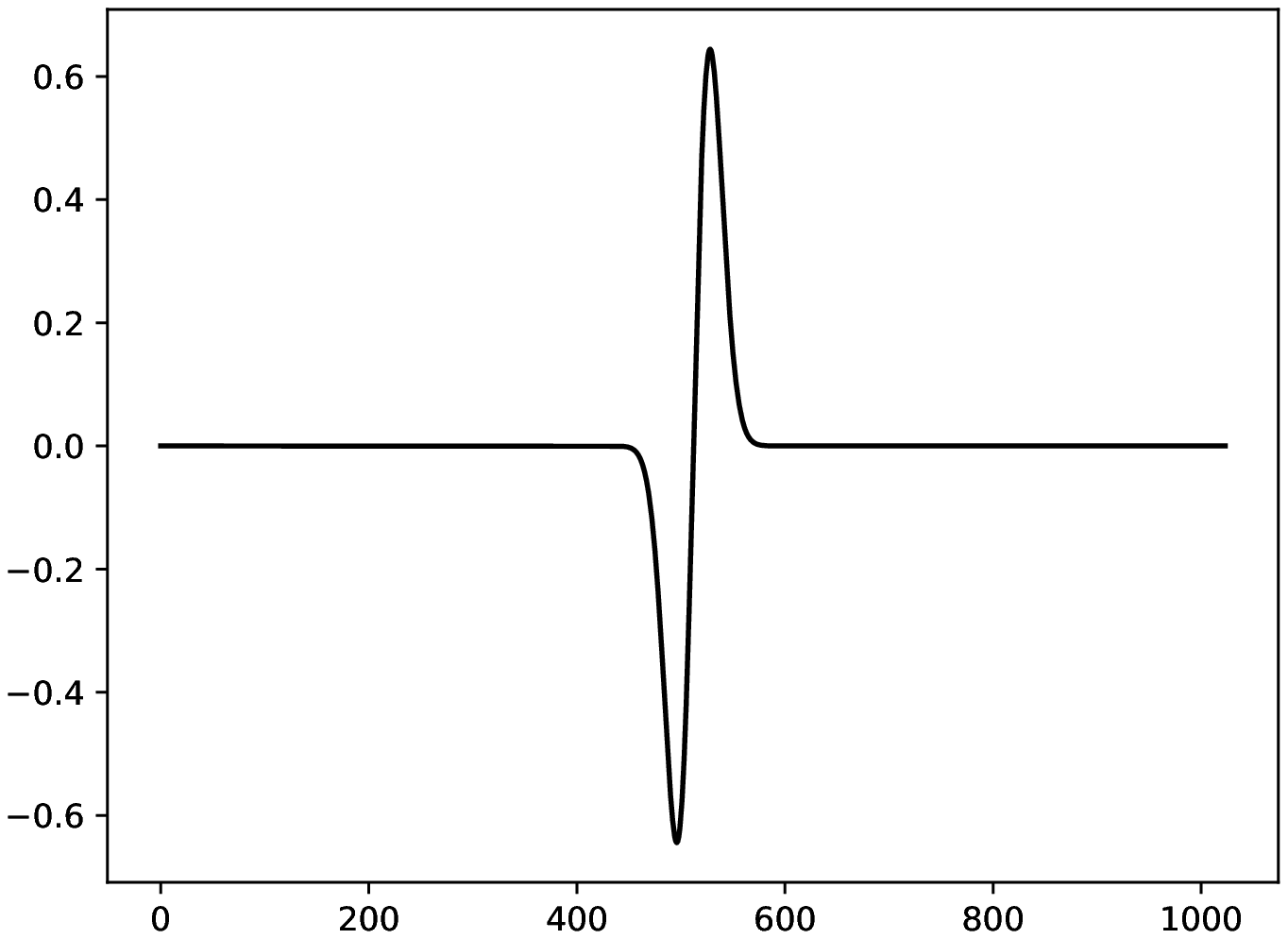}}
    \subfloat[][]{\includegraphics[width=.45\linewidth]{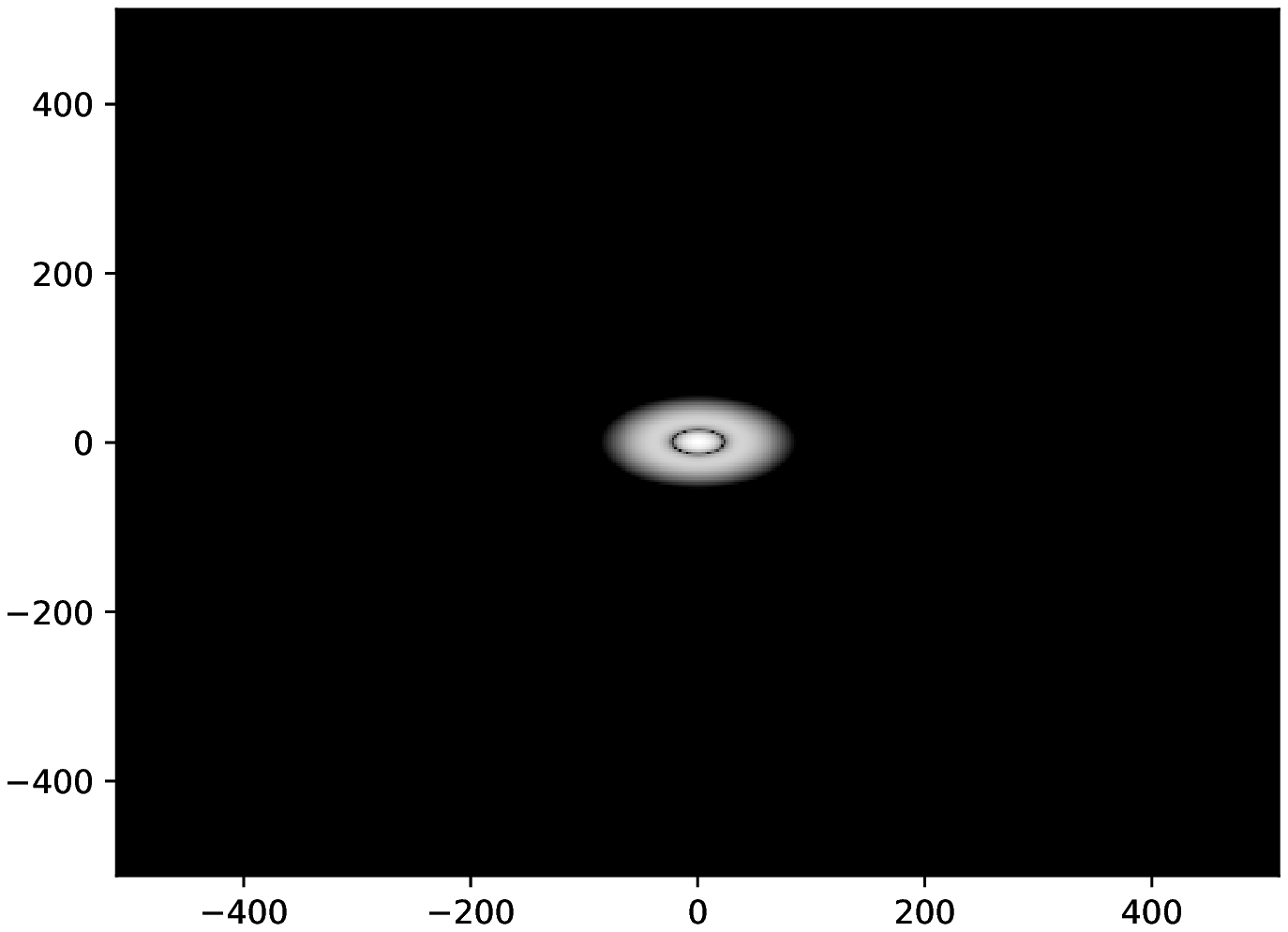}}
    \caption{Picture of a discretisation of the Hermite function $H_1$ and its ambiguity function.}
    \label{fig:hermite1}
\end{figure}
\begin{figure}
    \centering
    \subfloat[][]{\includegraphics[width=.45\linewidth]{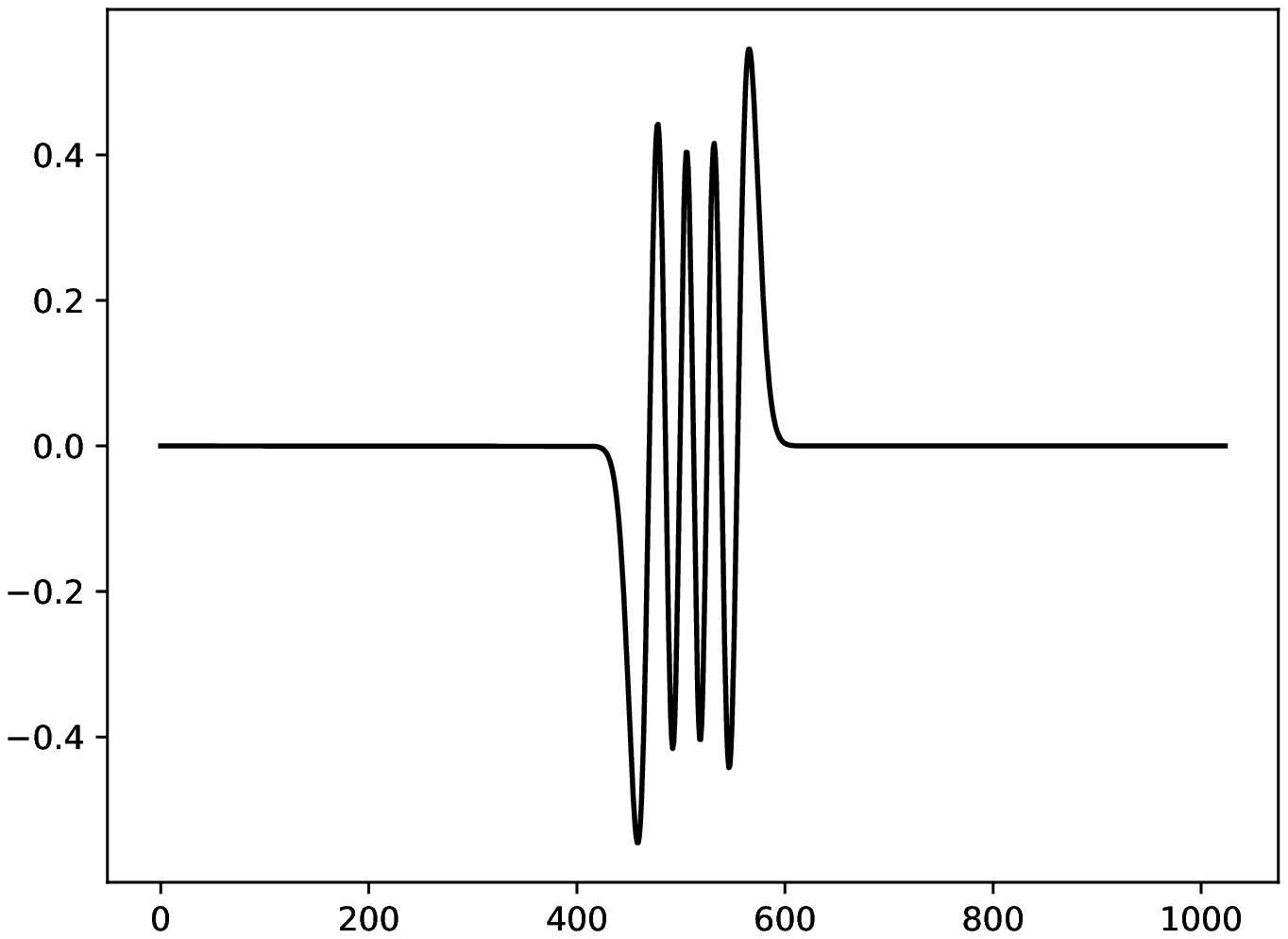}}
    \subfloat[][]{\includegraphics[width=.45\linewidth]{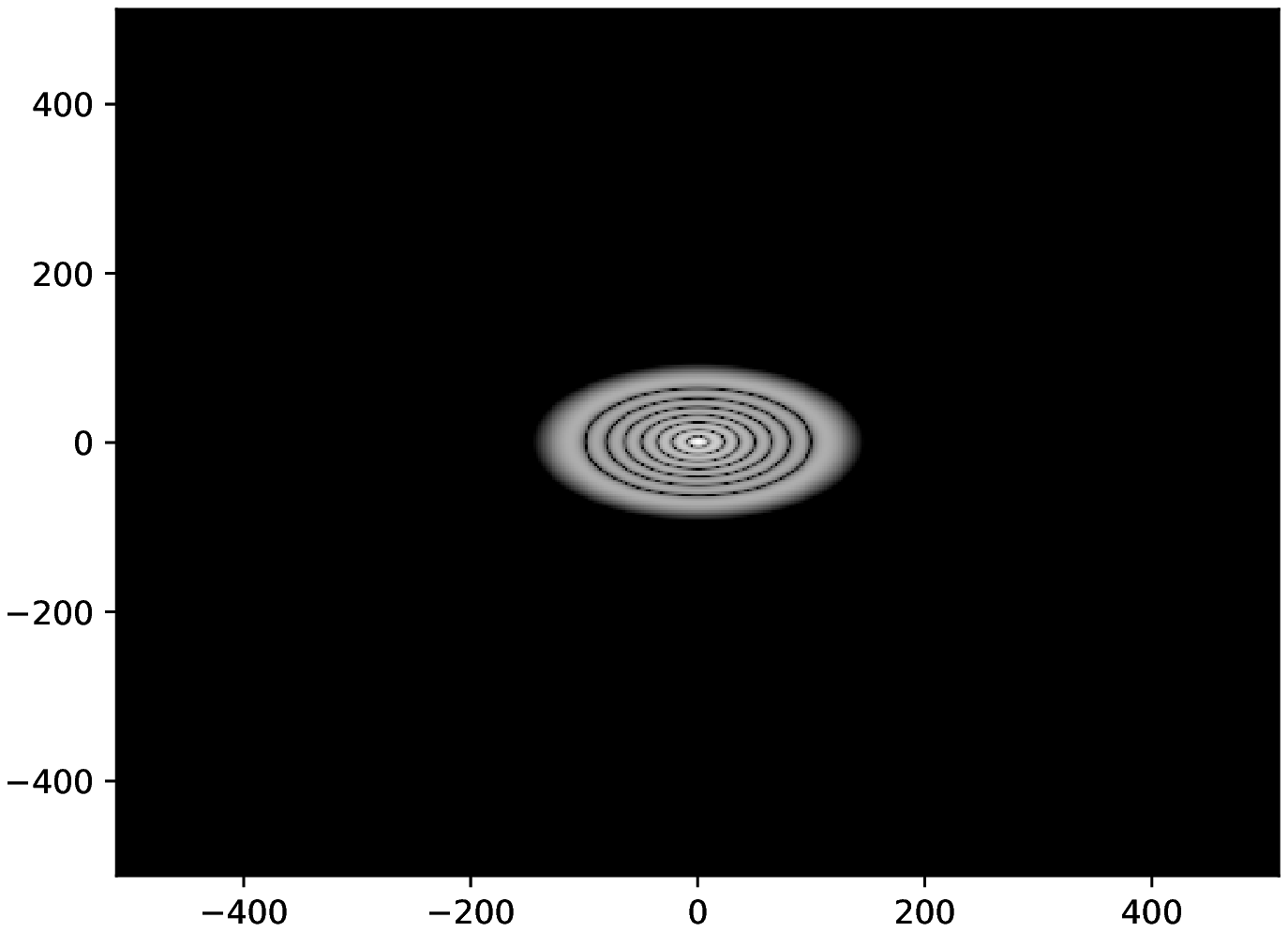}}
    \caption{Picture of a discretisation of the Hermite function $H_7$ and its ambiguity function.}
    \label{fig:hermite7}
\end{figure}
\[
    \calA H_n(x,\omega) = \rme^{-\frac{\pi}{2}\abs{z}^2} L_n^{(0)}(\pi \abs{z}^2)
    = \rme^{-\frac{\pi}{2}\abs{z}^2} \sum_{m = 0}^n \binom{n}{m} \frac{(-\pi)^m \abs{z}^{2m}}{m!}.
\]
Therefore, the set of roots of $\calA H_n$ consists of concentric rings around the origin of the
time-frequency plane. The radius of these rings is determined by the positive roots 
of the $n$-th Laguerre polynomial $L_n^{(0)}$. In particular, $\calA H_n$ is non-zero almost 
everywhere and it follows from Lemma \ref{lem:classical} that all $f \in L^2(\bbR)$ are uniquely 
determined up to global phase by their STFT measurements. As for the Gaussian case, uniqueness of
STFT phase retrieval already follows from the ambiguity function relation. Note that the
Fourier transform of the Hermite function $H_n$ is $(-\rmi)^n H_n$ \cite{Groechenig01}. In
addition, one can show that $h_n(t) := \rme^{\pi t^2} H_n(t)$, $t \in \bbR$, is a polynomial of
degree $n$ \cite{Folland89}. It follows
that $\calF H_n$ has only finitely many roots and thus $\calF H_n$ is non-zero almost everywhere. In 
addition, as $H_n$ is real-valued, it follows from Theorem \ref{thm:sampled} that for all $B > 0$,
it holds that all $f \in \mathrm{PW}_B^2$ that are real-valued on the real line can be recovered up to
global sign from the sampled measurements $\abs{\calV_{H_n} f(\tfrac{n}{4B},0)}$, for $n \in \bbZ$.

\vspace{5pt}
\noindent
The third class of window functions, we consider is that of compactly supported window 
functions. This class includes all windows commonly used in practice. Consider for instance 
the \emph{rectangular window}
\[
\phi(t) = \begin{cases}
    1 & \mbox{if } t \in [-1,1], \\
    0 & \mbox{else},
\end{cases}
\]
or the \emph{Hanning window} $\phi := \cos^2 \chi_{[-\pi/2,\pi/2]}$. If $\phi \in L^2(\bbR)$ is
compactly supported, then for any fixed $x \in \mathbb{R}$, the function $\omega \mapsto
\calA \phi(x,\omega)$ is bandlimited. In particular, $\omega \mapsto \calA \phi(x,\omega)$ extends
to an analytic function on the complex plane. Therefore, $z \mapsto \calA \phi(x,z)$ is either zero
or has merely isolated zeroes on the real line. Hence, we can conclude that for all $x \in \bbR$
such that $\omega \mapsto \calA \phi(x,\omega)$ is not the trivial map, it holds that
\begin{figure}
    \centering
    \subfloat[][]{\includegraphics[width=.45\linewidth]{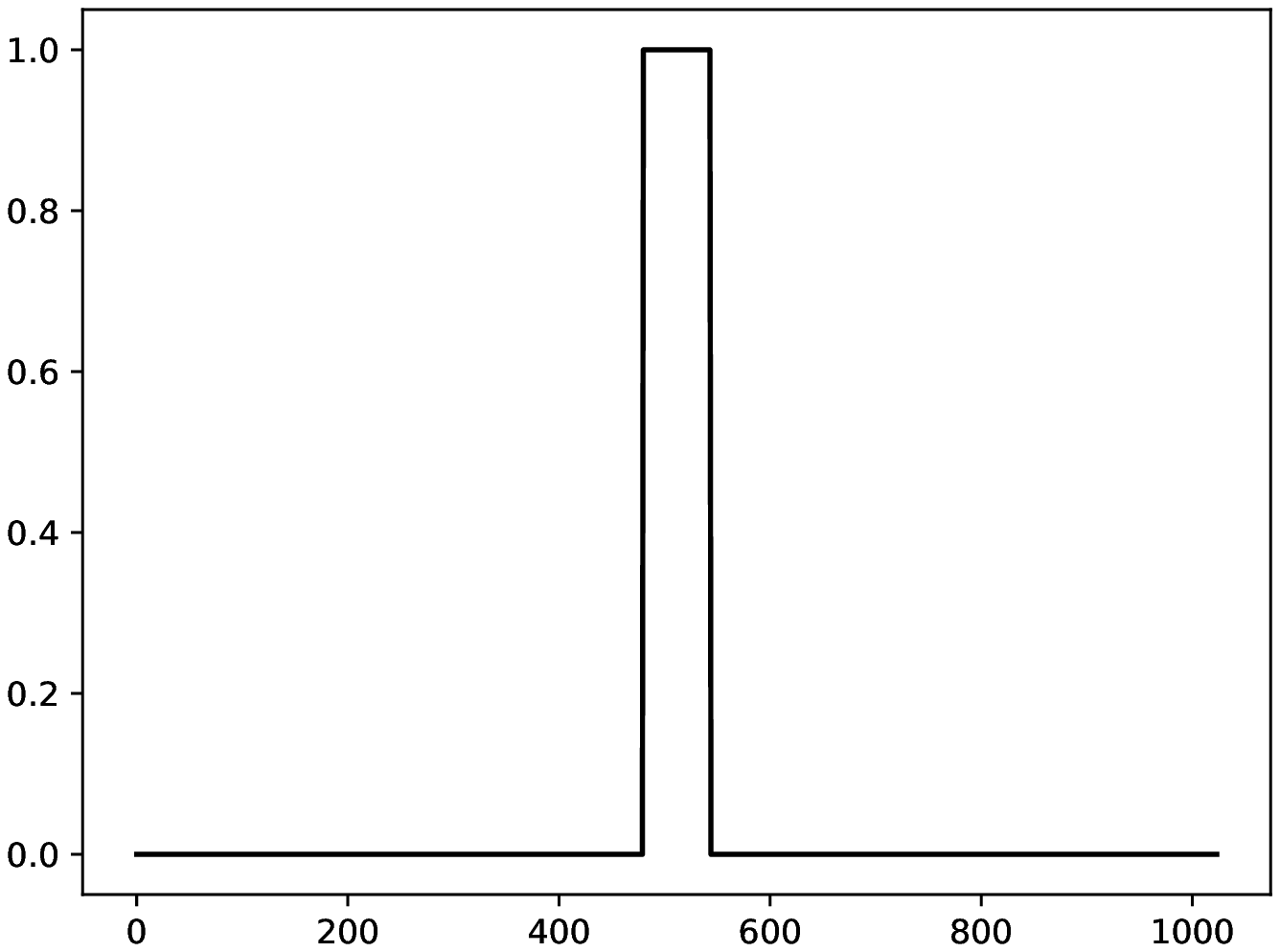}}
    \subfloat[][]{\includegraphics[width=.45\linewidth]{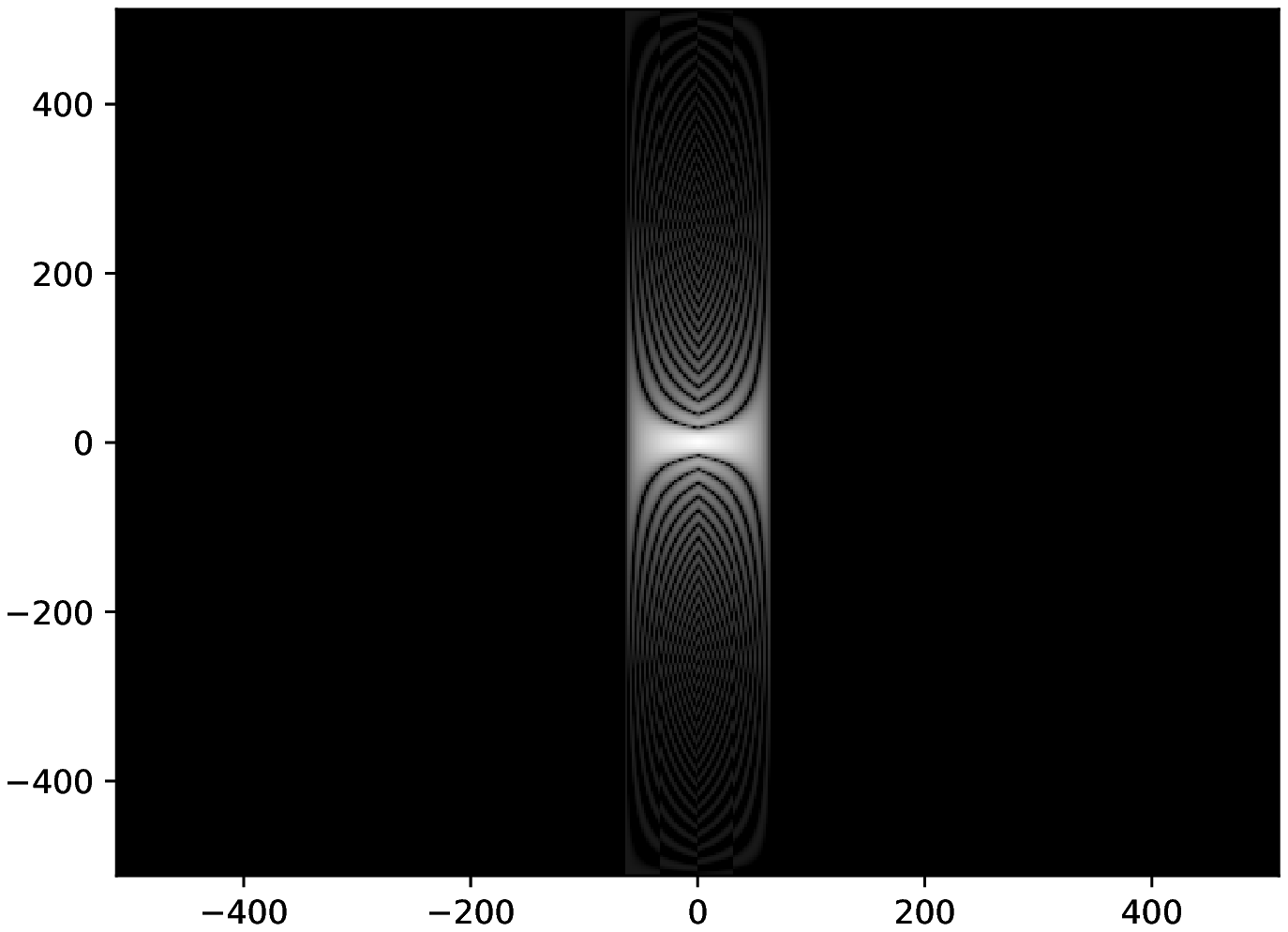}}
    \caption{Picture of a discretisation of the rectangular window and its ambiguity function.}
    \label{fig:rectangular}
\end{figure}
\begin{figure}
    \centering
    \subfloat[][]{\includegraphics[width=.45\linewidth]{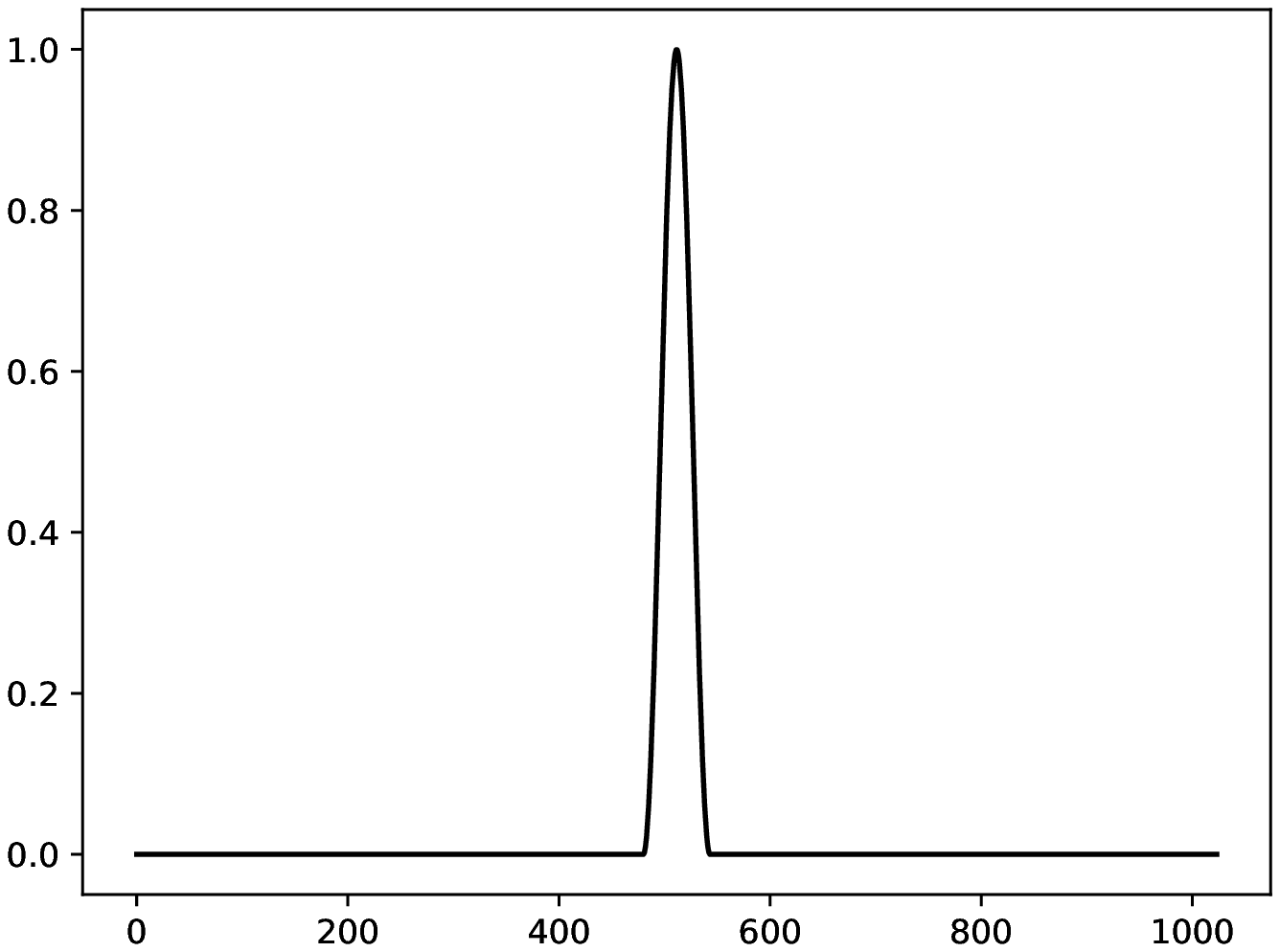}}
    \subfloat[][]{\includegraphics[width=.45\linewidth]{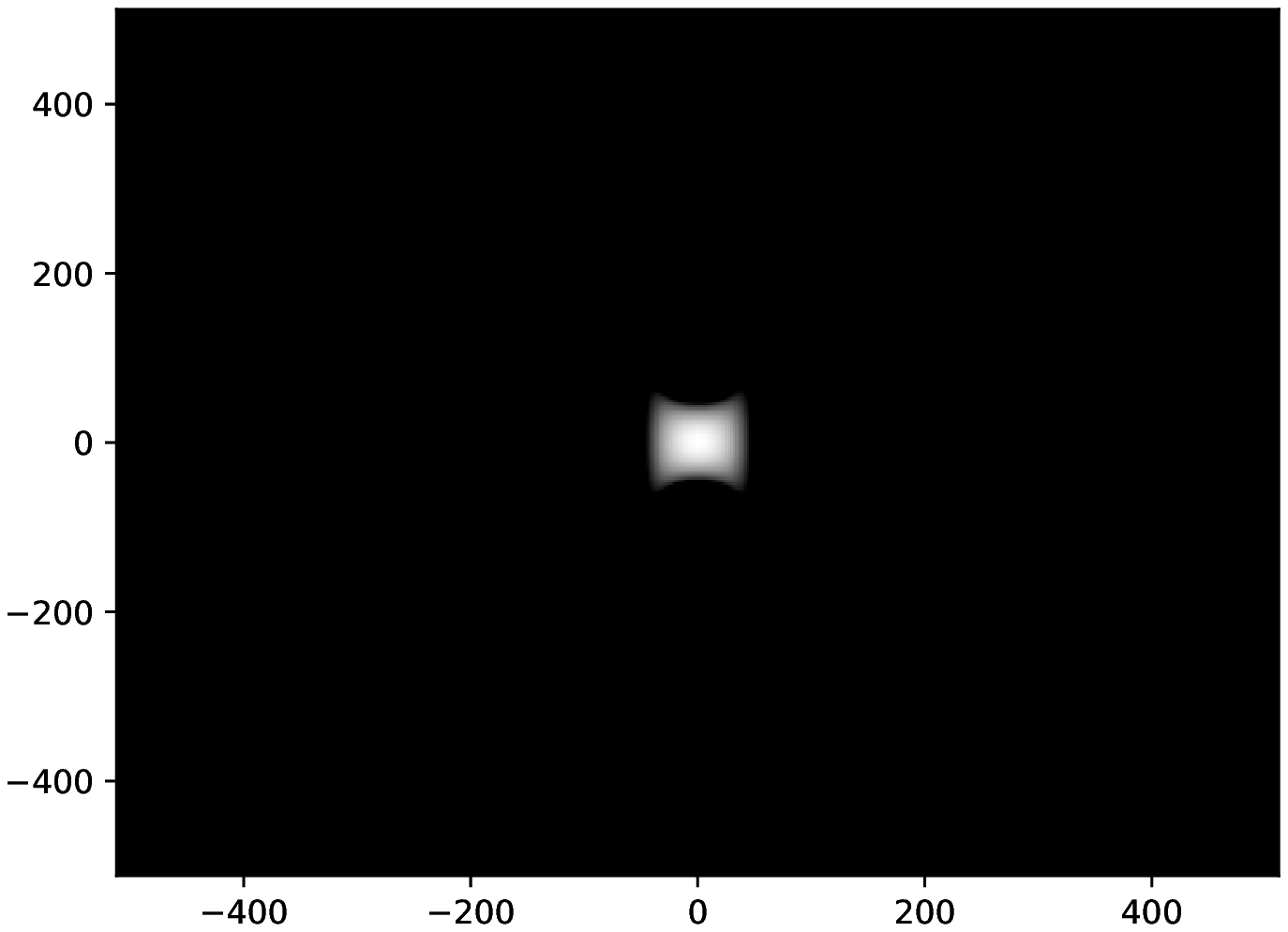}}
    \caption{Picture of a discretisation of the Hanning window and its ambiguity function.}
    \label{fig:hanning}
\end{figure}
\[
    \calA \phi(x,\omega) \neq 0, \qquad \mbox{for a.e.~}\omega \in \bbR.
\]
For a depiction of the ambiguity functions of the rectangular and Hanning windows see
Figure \ref{fig:rectangular} and Figure \ref{fig:hanning}, respectively. As $\calA \phi(0,\omega) =
\calF(\abs{\phi}^2)(\omega)$, it follows that $\calA \phi (0,\omega) \neq 0$ for a.e. $\omega \in
\bbR$, as long as $\phi$ is not the trivial window. Therefore, Theorem \ref{thm:real} implies that
for all $B > 0$, it holds that all $f \in \mathrm{PW}_B^2$ which are real-valued on the real line
are uniquely determined up to global sign by their STFT measurements. We can say more, however. If
$\phi$ is not the trivial window, then $\calA \phi (0,0) = \norm{\phi}_2 > 0$. As $\calA \phi $ is
continuous, it follows that for all $x > 0$ which are small enough, $\calA \phi (x,0) > 0$.
Therefore, $\calA \phi (x,\omega) \neq 0$ for almost every $\omega \in \bbR$. By Theorem
\ref{thm:complex}, we find that for all $B > 0$, it holds that all $f \in \mathrm{PW}_B^2$ are
uniquely determined up to global phase by their STFT magnitudes.

\paragraph{Acknowledgements} The authors would like to thank Giovanni S.~Alberti for fruitful
discussions and acknowledge funding through SNF Grant 200021\_184698.

\appendix

\section{The ambiguity function relation}
\label{app:ambiguity}

\begin{proof}[Proof of the ambiguity function relation]
Using the family $\{f_x \in L^1(\bbR) \,|\, x \in \bbR \}$ given by 
\[
    f_x(t) = f(t) \overline{\phi(t-x)}, \qquad t \in \bbR,
\]
for $x \in \bbR$, allows us to write 
\[
    \calV_\phi f(x,\omega) = \calF f_{x}(\omega), \qquad x,\omega \in \bbR.
\]
In addition, we can readily see that 
\[
    \overline{\calF f_x(\omega)} = \calF f_x^\#(\omega), \qquad x,\omega \in \bbR,
\]
where $f_x^\#(t) = \overline{f_x(-t)}$, for $t,x \in \bbR$. Let $x \in \bbR$
be fixed but arbitrary. As $f_x \in L^1(\bbR)$, it follows from the convolution theorem that 
\[
    \abs{\calV_\phi f(x,\omega)}^2 = \calV_\phi f(x,\omega) \overline{\calV_\phi f(x,\omega)}
    = \calF\left( f_x \ast f_x^\# \right)(\omega), \qquad \omega \in \bbR.
\]
As $\calV_\phi f \in L^2(\bbR^2)$ (by the orthogonality relations of the STFT
\cite{Groechenig01}), it follows that the STFT magnitude measurements squared are in $L^1$ and thus
the Fourier inversion theorem implies that 
\[
    \calF\left( \abs{\calV_\phi f(x,\cdot)}^2 \right)(-x')
    = \left( f_x \ast f_x^\# \right)(x')
    = \int_\bbR f(t) \overline{f(t-x') \phi(t-x)} \phi(t-x-x') \dd t,
\]
for $-x' \in \bbR$. Finally, we can see the above as a function in $x \in \bbR$ and note that 
\begin{align*}
    \int_\bbR \abs{\int_\bbR f(t) \overline{f(t-x') \phi(t-x)} \phi(t-x-x') \dd t} \dd x
    &\leq \int_\bbR \int_\bbR \abs{f(t) f(t-x') \phi(t-x) \phi(t-x-x')} \dd t \dd x \\
    &= \int_\bbR \abs{f(t) f(t-x')} \int_\bbR \abs{\phi(t-x) \phi(t-x-x')} \dd x \dd t \\
    &= \int_\bbR \abs{f(t) f(t-x')} \int_\bbR \abs{\phi(x) \phi(x-x')} \dd x \dd t \\
    &\leq \norm{f}_2^2 \norm{\phi}_2^2 < \infty,
\end{align*}
for $x' \in \bbR$, by the triangle inequality, Tonelli's theorem, a change of variables and
Cauchy--Schwarz. Therefore, we may take the Fourier transform in $x$ and obtain 
\begin{align*}
    \calF\left( \abs{\calV_\phi f}^2 \right)(\omega',-x')
    &= \int_\bbR \int_\bbR f(t) \overline{f(t-x') \phi(t-x)} \phi(t-x-x') \rme^{-2\pi\rmi x\omega'} \dd t \dd x \\
    &= \int_\bbR f(t) \overline{f(t-x')} \rme^{-2\pi\rmi t\omega'}
        \int_\bbR \overline{\phi(t-x)} \phi(t-x-x') \rme^{2\pi\rmi (t-x)\omega'} \dd x \dd t \\
    &= \calV_f f(x', \omega') \overline{\calV_\phi \phi(x',\omega')},
\end{align*}
for $x', \omega' \in \bbR$, by Fubini's theorem and a change of variables. Finally, note that 
all the equalities in this proof are actual equalities as all functions that we compare are 
continuous functions by virtue of them being Fourier transforms of $L^1$ functions.
\end{proof}

\section{The ambiguity function of a Paley--Wiener function}
\label{app:pw_amb}

\begin{proof}[Proof of Lemma \ref{lem:pw_amb}]
It follows from Plancherel's theorem that the restriction of $f$ to the real line is in
$L^2(\bbR)$. Therefore, $\calA f (x,\omega) = \rme^{\pi \rmi x \omega} \calV_f f(x,\omega)$ is
uniformly continuous \cite{Groechenig01}. Now, define $f_x(t) := \overline{f(t-x)}$, for $t,x\in
\bbR$, and let $x \in \bbR$ be arbitrary but fixed. We compute
\[
    \rme^{-\pi\rmi x\omega} \calA f (x,\omega) = \int_\bbR f(t) \overline{f(t-x)} \rme^{-2\pi\rmi t\omega} \dd t
    = \calF\left( f \cdot f_x \right)(\omega)
    = \left( \calF f \ast \calF f_x \right)(\omega),
\]
using the convolution theorem. Furthermore, we find that
\begin{align*}
    \calF f_x(\omega) &= \int_\bbR \overline{f(t-x)} \rme^{-2\pi\rmi t\omega} \dd t
    = \rme^{-2\pi\rmi x\omega} \int_\bbR \overline{f(t)} \rme^{2\pi\rmi t (-\omega)} \dd t
    = \rme^{-2\pi\rmi x\omega} \overline{\calF f(-\omega)} \\ &= \rme^{-2\pi\rmi x\omega} \overline{\calF f(-\omega)}.
\end{align*}
Therefore, we have 
\[
    \rme^{-\pi\rmi x\omega} \calA f (x,\omega) = \int_\bbR \calF f(\xi) \calF f_x(\omega - \xi) \dd \xi
    = \int_{-B}^B \calF f(\xi) \overline{\calF f(\xi-\omega)} \rme^{-2\pi\rmi x(\omega-\xi)} \dd \xi.
\]
If $\omega \in \bbR$ is such that $\abs{\omega} \geq 2B$, then we can readily see that $\xi - \omega
\not\in [-B,B]$, for $\xi \in (-B,B)$. It follows that $\calA f (x,\omega) = 0$.
\end{proof}

\end{document}